\title{On the Hausdorff dimension of $\textrm{CAT}(\kappa)$ surfaces}
\author{David Constantine}
\address{
Wesleyan University \\
Mathematics and Computer Science Department \\
Middletown, CT 06459}
\author{Jean-Fran\c{c}ois Lafont}
\address{Department of Mathematics\\
                 Ohio State University\\
                 Columbus, Ohio 43210}
\date{\today}
\newtheorem{thm}{Theorem}
\newtheorem{lem}[thm]{Lemma}
\newtheorem{prop}[thm]{Proposition}
\newtheorem{cor}[thm]{Corollary}
\theoremstyle{definition}
\newtheorem*{rem}{Remark}
\newtheorem{ques}{Question}
\newtheorem{defn}[thm]{Definition}
\numberwithin{equation}{section}
\def\Pb{\ifmmode{\Bbb P}\else{$\Bbb P$}\fi}
\def\Z{\ifmmode{\Bbb Z}\else{$\Bbb Z$}\fi}
\def\Q{\ifmmode{\Bbb Q}\else{$\Bbb Q$}\fi}
\def\C{\ifmmode{\Bbb C}\else{$\Bbb C$}\fi}
\def\R{\ifmmode{\Bbb R}\else{$\Bbb R$}\fi}
\def\H{\ifmmode{\Bbb H}\else{$\Bbb H$}\fi}
\def\diam{\operatorname{diam}}
\begin{document}
\maketitle

\begin{abstract}

We prove a that a closed surface with a $\textrm{CAT}(\kappa)$ metric has Hausdorff dimension $=2$, and that there are uniform upper and lower bounds on the two-dimensional Hausdorff measure of small metric balls. We also discuss a connection between this uniformity condition and some results on the dynamics of the geodesic flow for such surfaces. Finally, we give a short proof of topological entropy rigidity for geodesic flow on certain $\textrm{CAT}(-1)$ manifolds.

\end{abstract}

\maketitle

\setcounter{secnumdepth}{1}

\setcounter{section}{0}

%
%%
%%%
%%%%
%%%%%%%%%%%%%%%%%%%%%%%
\section{Introduction}

%%%%%%%%%%%%%%%%%%%%%%%

Let $\Sigma$ be a closed surface, and let $d$ be a locally $\textrm{CAT}(\kappa)$ metric on $\Sigma$. One quantity of natural interest is the Hausdorff dimension of $(\Sigma,d)$, denoted $\dim_H(\Sigma,d)$. This dimension is bounded below by 2, the topological (covering) dimension of $\Sigma$. However,  for an arbitrary metric on $\Sigma$ there is no upper bound; this can be seen by `snowflaking' the metric -- replacing $d(x,y)$ with $d'(x,y)=d(x,y)^\alpha$ for $0<\alpha<1$, which raises the the dimension by a factor of $1/\alpha$ (see, e.g. \cite{tyson_wu}). In this paper we examine the restriction placed on $\dim_H(\Sigma,d)$ by the $\textrm{CAT}(\kappa)$ condition, and prove the following theorem:

\begin{thm}\label{main thm}
Let $(\Sigma,d)$ be a $\textrm{CAT}(\kappa)$ closed surface. Then $\dim_H(\Sigma,d)=2$. Moreover, there exists some $\delta_0>0$ such that for all $0<\delta\leq \delta_0$,
\[ \inf_{p\in\Sigma}H^2(B(p,\delta))>0  \ \ \mbox{ and } \ \ \sup_{p\in\Sigma}H^2(B(p,\delta))<\infty \]
where $H^2$ denotes the 2-dimensional Hausdorff measure and $B(p,\delta)$ is the ball of radius $\delta$ around $p$.
\end{thm}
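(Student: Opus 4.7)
The plan is to exploit the local structure of $(\Sigma,d)$ near each point via the tangent cone, combined with the compactness of $\Sigma$, to obtain the required uniformity. By compactness and the standard theory of $\textrm{CAT}(\kappa)$ spaces, there exists a uniform $\delta_0>0$ such that for every $p\in\Sigma$ and every $0<\delta\leq\delta_0$: the ball $B(p,\delta)$ is convex and uniquely geodesic; the exponential map $\exp_p\colon B_{T_p\Sigma}(0,\delta)\to B(p,\delta)$ is a homeomorphism; and, if $\kappa>0$, $\delta_0<\pi/(2\sqrt{\kappa})$. Because $\Sigma$ is a topological $2$-manifold, the space of directions $\Sigma_p$ at each $p$ is homeomorphic to $S^1$, and the tangent cone $T_p\Sigma$ is the Euclidean cone over $\Sigma_p$; its ``cone angle'' $\omega(p)$, defined as the length of $\Sigma_p$, satisfies $\omega(p)\geq 2\pi$ by the upper curvature bound.

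For the lower bound, the Reshetnyak-type comparison for $\textrm{CAT}(\kappa)$ spaces at sub-$\delta_0$ scales guarantees that $\exp_p$ is distance non-decreasing. Since a distance-non-decreasing map cannot decrease Hausdorff measure, and $\exp_p$ maps $B_{T_p\Sigma}(0,\delta)$ onto $B(p,\delta)$, one obtains
\[
H^2(B(p,\delta))\;\geq\;H^2\bigl(B_{T_p\Sigma}(0,\delta)\bigr)\;=\;\tfrac{1}{2}\omega(p)\,\delta^2\;\geq\;\pi\delta^2,
\]
giving the uniform lower bound $\inf_{p\in\Sigma}H^2(B(p,\delta))\geq \pi\delta^2>0$.

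The upper bound is more delicate, because the $\textrm{CAT}(\kappa)$ condition is an upper curvature bound, whose natural consequences tend to produce lower, not upper, area estimates. My plan is: (i) establish a uniform upper bound $\omega(p)\leq \Omega<\infty$ on the cone angles using that $\Sigma$ is a topological $2$-manifold (so each $\Sigma_p$ is a topological circle of finite length) together with compactness of $\Sigma$ and upper-semicontinuity arguments, possibly aided by a generalized Gauss--Bonnet identity to control the accumulation of points with $\omega(p)>2\pi$; (ii) show that, once $\omega(p)$ is uniformly bounded, the map $\exp_p$ is in fact bi-Lipschitz at scales below $\delta_0$ with a uniform constant depending only on $\Omega$, $\kappa$, and $\delta_0$. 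Combining (i) and (ii) yields $H^2(B(p,\delta))\leq C\delta^2$ uniformly. The dimension equality $\dim_H(\Sigma,d)=2$ is then immediate: the lower bound yields $\dim_H\geq 2$, while finitely many balls of radius $\delta_0$ cover $\Sigma$ and the upper bound forces $H^2(\Sigma)<\infty$, so $\dim_H\leq 2$.

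The main obstacle, and the crux of the argument, lies in the upper bound. The $2$-manifold hypothesis is indispensable here: without it, cone angles need not be finite or uniformly bounded, and the upper area estimate can fail. The technically heaviest step is the uniform bi-Lipschitz control on $\exp_p$, which requires a careful analysis of tangent-cone geometry that goes beyond the bare $\textrm{CAT}(\kappa)$ condition.
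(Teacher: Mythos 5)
Your lower bound is essentially the paper's argument: the paper constructs a non-expanding map $\log_p$ from $B(p,\epsilon_0)$ to the $\kappa$-cone over $S_p(\tilde\Sigma)$ and then projects onward to $M_\kappa$ (Proposition \ref{prop:contracting}); your ``$\exp_p$ is distance non-decreasing'' is the same statement read in reverse, and the area comparison with the cone then gives the uniform lower bound. One small point: for $\kappa>0$ you should compare against the $\kappa$-cone ball rather than the Euclidean cone ball, but at small scales these differ only by a bounded factor, so this is cosmetic.

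The upper bound, however, has a genuine gap — in fact two. First, your step (i) asserts that ``each $\Sigma_p$ is a topological circle of finite length,'' and then appeals to compactness, upper-semicontinuity, and a vague Gauss--Bonnet argument to upgrade this to a uniform bound. But finiteness of the length of $\Sigma_p$ (equivalently rectifiability of small distance spheres $S(p,\epsilon)$) is not automatic for a $\textrm{CAT}(\kappa)$ surface — it is precisely the content of the paper's Lemma \ref{lem:finite}, whose proof is a fairly delicate connectedness argument on equivalence classes of directions, and the uniform bound is then a separate compactness argument (Lemma \ref{lem:uniform}). You have skipped this entirely. Second, and more seriously, step (ii) claims that a uniform bound $\omega(p)\le\Omega$ forces $\exp_p$ to be bi-Lipschitz at scale $\delta_0$ with a constant depending only on $\Omega$, $\kappa$, and $\delta_0$. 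This is asserted without proof, and it is not true as stated: $\omega(p)$ sees only the infinitesimal geometry at $p$, whereas the expansion of $\exp_p$ at radius $\delta_0$ is governed by the geometry of the whole ball, including any concentration of negative curvature (or cone points $q\ne p$) between $p$ and $S(p,\delta_0)$. Since $\textrm{CAT}(\kappa)$ imposes no lower curvature bound, there is no a priori control on the outward Lipschitz constant of $\exp_p$ in terms of $\omega(p)$ alone. The paper sidesteps this by never claiming any Lipschitz control on $\exp_p$: instead, it uses the uniform circumference bound on $S_{\delta_0}$ to produce a controlled number of geodesic triangles with apex $p$, and then uses $\textrm{CAT}(\kappa)$ comparison inside each such triangle (Figure \ref{fig:compare}) to cover it by $O(\delta_0/\rho)$ balls of radius $\rho$; summing gives $H^2(B(p,\delta_0))\le C$ uniformly. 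Your proposal correctly identifies that an upper area bound is the hard direction, but the mechanism you propose to establish it (uniform bi-Lipschitz control of $\exp_p$) is both unproved and, in general, unavailable.
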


We note that the second statement of the theorem implies the first, but not vice versa. Indeed there are metric spaces with Hausdorff dimension $d$ whose $d$-dimensional Hausdorff measures are zero or infinite.

We became interested in this question for Hausdorff measures, in particular the uniform bounds on the measures of balls, while thinking about some results on entropy for geodesic flows on $\textrm{CAT}(-1)$ manifolds. We include a discussion of these results in the final section of the paper, but here we state the two main theorems we prove.

\begin{thm}\label{thm:surface vol entropy}
Let $(\Sigma,d)$ be a closed surface with a $\textrm{CAT}(0)$ metric. Let $\phi_t$ be the geodesic flow on the space of geodesics for $(\Sigma,d)$. Then the topological entropy for the flow equals the volume growth entropy for the Hausdorff 2-measure.
\end{thm}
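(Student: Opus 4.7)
The plan is to prove $h_{\text{top}}=h_{\text{vol}}$ by a Manning-type sandwich: both quantities will be shown to equal the critical exponent $\delta(\Gamma)$ of the deck group $\Gamma=\pi_1(\Sigma)$ acting on the universal cover $\tilde\Sigma$. Theorem \ref{main thm} is the essential input, since it lets us convert between the Hausdorff area of balls in $\tilde\Sigma$ and orbital counts.

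\textbf{Step 1 (Volume growth = orbital growth).} Fix $\tilde p\in\tilde\Sigma$ and let $D=\operatorname{diam}(\Sigma)$. A fundamental domain around each translate $\gamma\tilde p$ has Hausdorff area bounded uniformly above and below by Theorem \ref{main thm}, and lies inside $B(\tilde p,R+D)$ whenever $\tilde d(\tilde p,\gamma\tilde p)\le R$. Hence
\[
h_{\text{vol}} \;=\; \limsup_{R\to\infty}\frac{1}{R}\log H^2\big(B(\tilde p,R)\big) \;=\; \limsup_{R\to\infty}\frac{1}{R}\log\#\{\gamma\in\Gamma : \tilde d(\tilde p,\gamma\tilde p)\le R\} \;=:\; \delta(\Gamma).
\]

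\textbf{Step 2 (Entropy = orbital growth).} For $h_{\text{top}}\ge\delta(\Gamma)$: enumerate translates $\gamma\tilde p$ in a shell of radius $R$ and choose for each the unique CAT(0) geodesic from $\tilde p$ to $\gamma\tilde p$. Translates that are $\epsilon$-separated yield $(R,\epsilon)$-Bowen-separated geodesics, since at time $R$ they are at $\epsilon$-separated points. Because any $\epsilon$-ball contains a uniformly bounded number of translates (properness of the action), the resulting separated set has size $\gtrsim e^{\delta(\Gamma)R}$. For $h_{\text{top}}\le\delta(\Gamma)$: CAT(0) convexity of $t\mapsto d(\phi_t v,\phi_t w)$ forces a Bowen ball $B_R^\epsilon(v)$ to be determined by its foot and time-$R$ endpoints, so covering the geodesic flow by $(R,\epsilon)$-Bowen balls reduces to covering $B(\tilde p,R)\subset\tilde\Sigma$ by $\epsilon$-balls, and the uniform lower bound of Theorem \ref{main thm} bounds this covering number by $C(\epsilon)\cdot H^2(B(\tilde p,R))$.

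\textbf{Main obstacle.} The subtle step is the upper bound, where the CAT(0) (not CAT($-1$)) setting allows flat strips and forces us to work without uniform hyperbolicity. Pure convexity handles the time-dynamics of orbits, so the remaining content is a geometric packing estimate: how many disjoint $\epsilon$-balls fit in $B(\tilde p,R)$? This is exactly what the uniform \emph{lower} bound of Theorem \ref{main thm} delivers, and is why the full strength of that result --- and not merely $\dim_H=2$ --- is what is needed here.
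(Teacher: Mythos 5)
The paper's proof is a two-liner: it observes that Theorem \ref{main thm} verifies hypothesis ($\star$) of Leuzinger's Theorem~\ref{leuzinger}, and that theorem (a $\textrm{CAT}(0)$ generalization of Manning) does the rest. You instead re-derive the Manning--Leuzinger equality from scratch via the critical exponent $\delta(\Gamma)$. Both routes use Theorem~\ref{main thm} for precisely the same purpose --- the uniform volume bounds are what let you pass between Hausdorff measure of $R$-balls and orbital counting, and in the entropy upper bound they supply the packing estimate --- and your Step~1 is essentially the content of Proposition~\ref{prop:dim=vol}. So the difference is one of exposition: you unpack what Leuzinger (following Manning) proves, while the paper cites it.

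There is, however, a gap in your lower-bound argument in Step~2. You assert that $\epsilon$-separated translates $\gamma\tilde p$ ``yield $(R,\epsilon)$-Bowen-separated geodesics, since at time $R$ they are at $\epsilon$-separated points.'' This conflates the universal cover with the quotient: all the translates $\gamma\tilde p$ project to the \emph{same} point $p \in \Sigma$, so the corresponding geodesic segments $c_\gamma$ in $\Sigma$ all satisfy $c_\gamma(R)=p$ and are \emph{not} separated at time $R$. The Bowen separation of these geodesics is genuine, but it happens in the interior of the time interval, and the correct argument is Manning's: if two geodesic loops based at $p$ of length $\leq R$ stay within $\epsilon$ of one another for all $t\in[0,R]$ (for $\epsilon$ small compared to the injectivity radius), they are homotopic rel endpoints, hence equal by uniqueness of geodesics in a homotopy class under nonpositive curvature. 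Thus distinct deck transformations give distinct homotopy classes give $(R,\epsilon)$-separated flow orbits. Your endpoint-separation reason is wrong even though your conclusion is right; this should be repaired before the argument can stand on its own, or else one should simply invoke Leuzinger as the paper does. Your upper-bound reasoning via convexity is correct, and you correctly identify that the uniform \emph{lower} volume bound, not merely $\dim_H = 2$, is what the packing step requires.
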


\begin{thm}\label{entropy rigidity}
Let $(X,d)$ be a closed $\textrm{CAT}(-1)$ manifold (not necessarily Riemannian), and suppose that $X$ admits a Riemannian metric $g$ so that $(X,g)$ is a locally symmetric space. Let $h_{top}(\phi^d_t)$ and $h_{top}(\phi^g_t)$ be the topological entropies for the geodesic flows under the two metrics. Then
\[ h_{top}(\phi^d_t)\geq h_{top}(\phi^g_t) \]
and if equality holds, $(X,d)$ is also locally symmetric. If $\dim X>2$, $(X,d)$ and $(X,g)$ are isometric.
\end{thm}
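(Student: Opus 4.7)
My plan is to reduce the theorem to the Besson--Courtois--Gallot (BCG) minimal entropy rigidity theorem, using a higher-dimensional analog of Theorem~\ref{thm:surface vol entropy} to translate topological entropy into volume entropy. The first step is to establish $h_{top}(\phi^d) = h_{vol}(\widetilde X, d)$, where $h_{vol}$ is the exponential growth rate of Hausdorff $n$-measure of balls in the universal cover ($n = \dim X$). In dimension $2$ this is Theorem~\ref{thm:surface vol entropy}; in higher dimensions the same argument should go through, provided one has uniform two-sided bounds on the Hausdorff $n$-measure of small balls (a higher-dimensional analog of Theorem~\ref{main thm}), which should follow from G\"unther-type volume comparison together with the CAT($-1$) hypothesis. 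For the Riemannian locally symmetric metric $g$, Manning's theorem gives the corresponding equality $h_{top}(\phi^g) = h_{vol}(X, g)$.

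The second step is a BCG-type comparison between the two volume entropies. Using the Patterson--Sullivan measure on $\partial_\infty (\widetilde X, d)$ (available for CAT($-1$) spaces through the work of Bourdon and Roblin) together with the visual measure on $\partial_\infty (\widetilde X, g)$, one constructs a $\pi_1$-equivariant barycenter map $F : \widetilde X \to \widetilde X$ descending to a degree-one self-map of $X$. BCG's Jacobian estimate $|\operatorname{Jac}(F)| \leq (h_d/h_g)^n$, integrated over a fundamental domain, then furnishes the inequality $h_d \geq h_g$ (after the appropriate volume bookkeeping). The equality case in BCG's computation forces $F$ to be a homothetic isometry of universal covers; for $\dim X \geq 3$ this descends to a global isometry $(X, d) \cong (X, g)$, while in dimension $2$ one can only conclude that $(X, d)$ has constant curvature $-1$, reflecting the Teichm\"uller moduli of hyperbolic surfaces.

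The main technical obstacle is transporting the BCG Jacobian computation from the smooth Riemannian setting to the synthetic CAT($-1$) setting: $(X, d)$ may have singular points and no globally compatible smooth structure, so the pointwise Jacobian bound requires a replacement framework---either using the regular-point structure of CAT($\kappa$) $n$-manifolds, or a synthetic reformulation in terms of Hausdorff measure. A secondary obstacle is establishing the uniform Hausdorff measure bounds of Step~1 in full generality, which is needed to extend Theorem~\ref{thm:surface vol entropy} beyond dimension~$2$.
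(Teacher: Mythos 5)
Your proposal contains two genuine gaps, both of which the paper sidesteps by taking a different route.

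First, you propose to establish $h_{top}(\phi^d) = h_{vol}(\widetilde X, d)$ using Hausdorff $n$-measure, and you assert that the needed uniform two-sided bounds on the Hausdorff $n$-measure of small balls (a higher-dimensional analog of Theorem~\ref{main thm}) ``should follow from G\"unther-type volume comparison together with the CAT($-1$) hypothesis.'' This is precisely the open problem the paper raises in Section~\ref{sec:example}, and the authors explicitly conjecture it may \emph{fail} in dimension $n\geq 5$: the Davis--Januszkiewicz / Charney--Davis hyperbolization examples produce closed CAT($-1$) $n$-manifolds whose small metric spheres are not even manifolds, so the local geometry is genuinely pathological and no G\"unther-type argument is available. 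The paper avoids this entirely by observing that Leuzinger's Theorem~\ref{leuzinger} works for \emph{any} measure $vol$ satisfying condition ($\star$), and then taking $vol$ to be the Riemannian volume form of the locally symmetric metric $g$ already assumed to exist on $X$. For a fixed Riemannian volume form on a compact manifold, condition ($\star$) for the $d$-metric balls is a routine compactness argument. Proposition~\ref{prop:dim=vol} then identifies $h_{vol}(\widetilde X, d)$ with $\dim_H(\partial^\infty\widetilde X, d^*_x)$ regardless of which admissible measure was used, since this quantity is really an orbit-counting growth rate.

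Second, you propose to run the Besson--Courtois--Gallot barycenter argument, and you correctly flag the ``main technical obstacle'': transporting the pointwise Jacobian estimate from the smooth Riemannian category to a singular CAT($-1$) manifold with no compatible smooth structure. This obstacle is real and unresolved in your sketch; BCG's argument uses the smooth exponential map, Jacobi fields, and a pointwise determinant inequality, none of which make literal sense on a piecewise-hyperbolic or otherwise singular CAT($-1$) manifold. The paper instead cites Bourdon's Theorem~\ref{bourdon}, which is already formulated and proved for $\Gamma$ acting cocompactly on a CAT($-1$) space, together with Pansu's Theorem~\ref{pansu} computing $\dim_H(\partial^\infty S,g^*)=km+k-2$ as the extremal value. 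Bourdon's theorem directly produces the isometric embedding in the equality case, so there is nothing to transport. Both of the ``obstacles'' you identify are not merely technical hurdles in the intended proof --- they are the reason the paper's proof takes a different path.
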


Theorem \ref{thm:surface vol entropy} is a version of Manning's \cite{manning} analogous result for Riemannian manifolds of non-positive curvature, and relies on some work of Leuzinger \cite{leuzinger}.  Theorem \ref{entropy rigidity} follows from our Theorem \ref{thm:surface vol entropy} and a rigidity result of Bourdon (\cite{bourdon_cr}). Our main work is to note how, via Theorem \ref{thm:surface vol entropy}, Bourdon's theorem can be recast as a topological entropy rigidity theorem. This fact may well be known to experts, but we have not found it addressed in the literature.

The paper is organized as follows. In Sections \ref{sec:circle} and \ref{sec:rectifiable} we show that small distance spheres around each point in $\Sigma$ are topological circles, and that they are rectifiable with bounded length. In Section \ref{sec:proof} we prove Theorem \ref{main thm}, and in Section \ref{sec:example} we discuss the extension of Main Theorem to higher dimensions, and give an example which indicates some of the complications in doing so. In Section \ref{sec:entropy} we give the proof of topological entropy rigidity (Theorem \ref{entropy rigidity}) for $\textrm{CAT}(-1)$ manifolds.

\

\subsection{Acknowledgements}

We would like to thank Enrico Leuzinger and Mike Davis for helpful conversations. The first author would like to thank Ohio State University for hosting him during the time that most of this work was done. The second author was partially supported by the NSF, under grant DMS-1510640.

\

%
%%
%%%
%%%%
%%%%%%%%%%%%%%%%%%%%%%%%%%%%%%%%%%%%%%%%%%%%%

\section{The topology of small distance spheres}\label{sec:circle}

Let $S(p, \epsilon) = \{z\in \Sigma: d(p,z)=\epsilon\}$ and $B(p, \epsilon) = \{z\in \Sigma: d(p,z)\leq \epsilon\}$ respectively denote the metric $\epsilon$-sphere and $\epsilon$-ball centered at $p$. In this section we prove for small $\epsilon$, all $S(p,\epsilon)$ are topological circles. We note that the argument only works for surfaces. In Section \ref{sec:example} we give examples of higher-dimensional $\textrm{CAT}(-1)$ manifolds where the analogous statement is not true.

Throughout this section, we work at small scale. We fix $\epsilon_0>0$ small enough so that the following two conditions are satisfied:
\begin{itemize}
	\item $\epsilon_0\leq D_\kappa/2$ where $D_\kappa$ is the diameter of the model space of constant curvature $\kappa$, and

	\item  For all $p\in\Sigma$, $B(p,\epsilon_0)$ is (globally) $\textrm{CAT}(\kappa)$.
\end{itemize}
At these scales, $B(p,\epsilon_0)$ is locally uniquely geodesic -- in particular there is a unique geodesic from $p$ to any point in $B(p,\epsilon_0)$, which varies continuously with respect to the endpoints. This will be a key fact in the work below. As a consequence, each such ball $B(p, \epsilon_0)$ is contractible, hence lifts isometrically to the universal cover $(\tilde \Sigma, \tilde d)$.

The following Lemma will be useful. Its proof, which is straightforward and can be adapted to any dimension, can be found in \cite[Proposition II.5.12]{bh}.

\begin{lem}\label{lem:extendible}
Let $[xy]$ be a geodesic segment in $\Sigma$ connecting an arbitrary pair of points $x$ and $y$. Then $[xy]$ can be extended beyond $y$. That is, there is a geodesic segment (not necessarily unique) $[xy']$ properly containing $[xy]$ as its initial segment. 
\end{lem}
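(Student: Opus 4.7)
The plan is to reduce the problem to a local one in a small $\textrm{CAT}(\kappa)$ neighborhood of $y$, and then use the surface structure of $\Sigma$ to produce a direction at $y$ along which $[xy]$ can be prolonged.

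First, I would pass to a ball $B(y,r)$ with $r<\epsilon_0$ small enough that the tail of $[xy]$ near $y$ lies inside $B(y,r)$ and the ball is (globally) $\textrm{CAT}(\kappa)$, hence geodesic and locally uniquely geodesic. The segment $[xy]$ then arrives at $y$ along some well-defined direction $v$ in the space of directions $\Sigma_y$, defined via the Alexandrov angle between germs of geodesics emanating from $y$.

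Second, I would use that $\Sigma$ is a closed surface with no boundary to argue that $y$ has a neighborhood homeomorphic to a disk, so that $\Sigma_y$ is topologically a circle. Combined with the $\textrm{CAT}(\kappa)$ hypothesis---which forces the link to be $\textrm{CAT}(1)$, and hence either an interval or a metric circle of length at least $2\pi$---this makes $\Sigma_y$ a metric circle of length at least $2\pi$. In such a circle there is a direction $w\in\Sigma_y$ at angular distance exactly $\pi$ from $v$.

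Third, I would take $y'=\gamma(\delta)$, where $\gamma:[0,\delta]\to B(y,r)$ is a short geodesic emanating from $y$ in direction $w$ and $\delta>0$ is chosen small, and verify that the concatenation $[xy]\ast\gamma$ is itself a geodesic. This uses the standard $\textrm{CAT}(\kappa)$ fact that if the Alexandrov angle at $y$ between the reversed segment $[yx]$ and $\gamma$ equals $\pi$, then $d(x,\gamma(\delta))=d(x,y)+\delta$, so $y$ lies on the geodesic $[xy']$ and $[xy']$ properly contains $[xy]$ as its initial segment.

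The main obstacle I anticipate is the second step: justifying the metric circle structure of $\Sigma_y$ at every point of the surface, including cone-type singular points where the $\textrm{CAT}(\kappa)$ inequality is tight. Once this local link description is secured, the extension construction and the angle-$\pi$ comparison argument in the third step become routine CAT($\kappa$) calculations of the sort found in Bridson--Haefliger's book; notably, the same outline adapts to higher dimensions provided one can establish the analogous statement about the link being a metric sphere of the appropriate size, which is precisely where dimension-specific difficulties enter.
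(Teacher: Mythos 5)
The paper does not reprove this lemma; it simply cites Bridson--Haefliger, Proposition II.5.12, which establishes geodesic extendibility for complete $\textrm{CAT}(\kappa)$ spaces that are topological manifolds (without boundary) via a \emph{local homology} argument: if some geodesic terminating at $y$ could not be extended, one could build a retraction exhibiting $H_*(B(y,\epsilon), B(y,\epsilon)\setminus\{y\})$ as trivial, contradicting the manifold assumption. Your proposal instead tries to go through the space of directions $S_y(\Sigma)$ and the existence of a direction at Alexandrov angle exactly $\pi$. This is a genuinely different route, and unfortunately it has a circularity problem in the present paper's logical order.

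The gap is in your second step. You assert that since $y$ has a disk neighborhood, the space of directions $S_y(\Sigma)$ is topologically a circle, and from there a $\textrm{CAT}(1)$ metric circle of length $\geq 2\pi$. But $S_y(\Sigma)$ is a metric object built from equivalence classes of geodesic germs under the Alexandrov angle, and its topology is not transparently controlled by the local topology of $\Sigma$. In fact the paper proves $S_p(\tilde\Sigma)\cong \mathbb S^1$ only in Lemma~\ref{lem:Spcomplete}, and that proof explicitly invokes Lemma~\ref{lem:extendible} to show that the natural projection $S_\epsilon \to S_p(\tilde\Sigma)$ is surjective; similarly, Proposition~\ref{prop:circle} (that $S_\epsilon$ is a circle) is itself proved using Lemma~\ref{lem:extendible}. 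So the circle structure of $S_y(\Sigma)$ is, in this paper, a downstream consequence of the very lemma you are trying to prove. To make your approach work you would need an independent argument that $S_y(\Sigma)$ contains a pair of directions at Alexandrov angle exactly $\pi$, and I don't see one that is easier than the Bridson--Haefliger homological argument. Your third step (concatenation at angle $\pi$ is a geodesic) is fine and standard. I'd also note that your closing remark about higher dimensions points the wrong way: the cited Bridson--Haefliger argument \emph{does} go through in all dimensions, precisely because it never needs the link to be a metric sphere, whereas Section~\ref{sec:example} of the paper shows links can fail to be manifolds in dimension $\geq 5$; so the link-based approach you propose is in fact the one that does \emph{not} generalize cleanly.
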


\begin{rem}
Using the compactness of $X$, and a connectedness argument on $\mathbb{R}$, this lemma 
implies that each geodesic segment $[xy]$ can be infinitely extended.
\end{rem}

%\begin{proof}[Proof of Lemma \ref{lem:extendible}]
%We can assume that $x$ and $y$ are at distance $<\epsilon_0$. By the choice of $\epsilon_0$, we can lift the $B(y,\epsilon_0)$ to $\tilde X$ homeomorphically, so we may assume $x,y\in \tilde X$ which is homeomorphic to $\mathbb{R}^2$ or $\mathbb{S}^2$ and exploit the topology of the plane or the sphere.

%In $\tilde X$, take a simple closed curve $c(\theta)$ ($\theta\in [0,1]$) surrounding $x$ and $y$ and with diameter $<\epsilon_0$. By completeness of $\tilde X$ and local uniqueness of geodesics, for each $\theta\in [0,1]$ there is a unique geodesic segment $[xc(\theta)]$ and this geodesic varies continuously in $\theta$ (see, e.g. CHECK THIS \cite[Theorem II.4.1]{bh}). A radial homotopy along these segments contracts the curve $c$ to $x$. But $c$ is homotopically nontrivial in $\tilde X \setminus\{y\}$ by the choice of $c$. Thus, there must be some $\theta_0$ with $[xy(\theta_0)]$ passing through $y$. This geodesic segment extends $[xy]$.
%\end{proof}

%\begin{rem}
%We note that this proof can be appropriately modified for $X$ of any dimension.
%\end{rem}

The main result of this section is the following:

\begin{prop}\label{prop:circle}
Let $\Sigma$ be a complete $\textrm{CAT}(\kappa)$ surface. Then for all $\epsilon<\epsilon_0$, $S(p,\epsilon)$ is homeomorphic to the circle $\mathbb{S}^1$.
\end{prop}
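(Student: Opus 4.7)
The plan is to identify $B(p,\epsilon)$ with a topological cone over $S(p,\epsilon)$ via a radial geodesic scaling map, and then use the fact that $\Sigma$ is a $2$-manifold near $p$ to pin down the topology of $S(p,\epsilon)$.

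Fix $\epsilon \leq \epsilon_0$ and consider the map
\[ F \colon S(p,\epsilon) \times [0,1] \longrightarrow B(p,\epsilon), \qquad F(x,t) = \gamma_{px}(t\epsilon), \]
where $\gamma_{px}$ is the unique unit-speed geodesic from $p$ to $x$. Continuity of $F$ follows from continuous dependence of $\gamma_{px}$ on its endpoints inside $B(p,\epsilon_0)$. Surjectivity uses Lemma \ref{lem:extendible} together with the remark on iterated extension: any $q \in B(p,\epsilon)$ lies on a geodesic $[pq]$ that can be prolonged until it reaches $S(p,\epsilon)$ at some $x$, and uniqueness identifies the prolongation with $\gamma_{px}$. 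Injectivity of $F$ on $S(p,\epsilon) \times (0,1]$ is the subtlest point: if two geodesics from $p$ shared an initial segment but later diverged to distinct points of $S(p,\epsilon)$, the Alexandrov angle between them at $p$ would be zero, contradicting the standard non-branching property of $\textrm{CAT}(\kappa)$ geodesics at scales below $D_\kappa/2$. Since $S(p,\epsilon)$ is closed in compact $\Sigma$ and hence compact, $F$ descends to a continuous bijection from the cone $C(S(p,\epsilon))$ onto $B(p,\epsilon)$; a compact-to-Hausdorff argument upgrades this to a homeomorphism $B(p,\epsilon) \cong C(S(p,\epsilon))$.

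Restricting this cone homeomorphism to $S(p,\epsilon) \times (0,1)$ gives a homeomorphism onto the punctured open ball $B^\circ(p,\epsilon) \setminus \{p\}$. Because $\Sigma$ is a topological $2$-manifold, the latter is an open subset of $\Sigma$ and thus itself a $2$-manifold without boundary. But a product $Y \times (0,1)$ is a $2$-manifold without boundary if and only if $Y$ is a compact $1$-manifold without boundary, and every such $Y$ is a disjoint union of finitely many circles.

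To eliminate the possibility of multiple components, I would choose $\delta \leq \epsilon_0$ small enough that $B(p,\delta)$ lies inside a chart of $\Sigma$ homeomorphic to $\mathbb{R}^2$. Then $B^\circ(p,\delta) \setminus \{p\}$ is a planar punctured open disk, which is connected, so the homeomorphic product $S(p,\delta) \times (0,1)$ is connected, forcing $S(p,\delta)$ itself to be a single circle. Finally, applying the cone construction at scale $\epsilon_0$ realizes every $S(p,\epsilon')$ with $0 < \epsilon' \leq \epsilon_0$ as the product slice $S(p,\epsilon_0) \times \{\epsilon'/\epsilon_0\}$, so all these spheres are pairwise homeomorphic, and hence each is a circle. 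The main obstacle I anticipate is the careful treatment of non-branching used for injectivity of $F$; this is a classical $\textrm{CAT}(\kappa)$ fact at small scale but deserves to be made explicit.
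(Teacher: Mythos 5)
Your proposal takes a genuinely different route from the paper. The paper verifies directly that $S_\epsilon$ is compact, connected, and is disconnected by every two-point subset, then invokes the classical characterization of $\mathbb S^1$ (\cite[Theorem 2-28]{hocking_young}); the two-point separation step is where the $2$-dimensionality enters, via the Schoenflies theorem and a Jordan-curve argument in a disk containing $S_\epsilon$. You instead build the cone homeomorphism $C(S_\epsilon)\cong B(p,\epsilon)$ from the radial geodesic map and then read off the topology of $S_\epsilon$ from the fact that $S_\epsilon\times(0,1)$ is an open subset of the surface. Your approach is in some ways more geometric and reuses the cone picture that the paper later builds anyway in Section~\ref{sec:proof} (the map $\log_p$), whereas the paper's argument is more self-contained at the point-set level. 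Both correctly identify the places where $\dim\Sigma=2$ is indispensable.

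That said, there is a step you treat as obvious that is in fact a nontrivial theorem: the claim that ``$Y\times(0,1)$ is a $2$-manifold without boundary if and only if $Y$ is a compact $1$-manifold without boundary.'' The ``if'' direction is clear, but the ``only if'' direction --- that a compact metric space $Y$ with $Y\times\mathbb R$ a surface must itself be a $1$-manifold --- is a genuine result about manifold factors, not a formal consequence of the definitions. It is \emph{true} in this low dimension: an excision/suspension computation of local homology shows that $H_k(Y,Y\setminus\{y\})\cong H_{k+1}(Y\times\mathbb R,\,Y\times\mathbb R\setminus\{(y,t)\})$, so $Y$ is a homology $1$-manifold, and homology $n$-manifolds are genuine manifolds for $n\leq 2$. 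But the analogous statement fails already for $4$-manifolds (Bing's dogbone space $D$ satisfies $D\times\mathbb R\cong\mathbb R^4$), so you cannot simply assert it. As written, this is a real gap; it is fixable by supplying the local-homology argument or by citing the appropriate recognition theorem. You flag the non-branching property for injectivity as the delicate point, and that step does deserve a sentence, but it is standard CAT($\kappa$) theory at scale below $D_\kappa$; the manifold-factor claim is the step that actually needs a proof or reference.
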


In order to establish this result, we use a well-known characterization of the circle $\mathbb{S}^1$.
The circle is the only compact, connected, metric space $(X,d)$ with the property that for any pair
of distinct points $a,b \in X$, the complement $X\setminus \{a,b\}$ is disconnected (see, e.g \cite[Theorem 2-28]{hocking_young}). 
Let $p\in \Sigma$ be an arbitrary point in $\Sigma$, and to simplify notation, we set $S_\epsilon := S(p, \epsilon)$.
We now claim that for $\epsilon<\epsilon_0$, $S_\epsilon$ is homeomorphic to a circle.

\begin{lem}\label{lem:pc}
For all $\epsilon<\epsilon_0$, $S_\epsilon$ is compact, path-connected, metric space.
\end{lem}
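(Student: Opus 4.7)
The metric-space claim is immediate since $S_\epsilon$ is a subspace of $(\Sigma, d)$. For compactness, $S_\epsilon = f^{-1}(\{\epsilon\})$ where $f = d(p, \cdot): \Sigma \to \mathbb{R}$ is continuous, so $S_\epsilon$ is closed in the compact surface $\Sigma$. Both verifications are routine.

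My plan for path-connectedness is to exhibit $S_\epsilon$ as the continuous image of a topological circle, using the unique radial geodesic structure on $B(p, \epsilon_0)$. First I would use that $\Sigma$ is a topological 2-manifold to produce a topological closed disk $D \subset \Sigma$ with $B(p, \epsilon) \subset D^\circ$ and $D \subset \{x \in \Sigma : d(p, x) < \epsilon_0\}$. Such a $D$ can be obtained by choosing a coordinate chart $U \cong \mathbb{R}^2$ around $p$ containing $B(p, \epsilon_0)$ and then sandwiching a Euclidean closed disk between the compact set $B(p, \epsilon)$ and the open set $\{d(p, \cdot) < \epsilon_0\}$, both viewed inside $\mathbb{R}^2$. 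Set $C := \partial D$; this is a topological circle lying in $B(p, \epsilon_0) \setminus B(p, \epsilon)$, and every $x \in C$ satisfies $d(p, x) > \epsilon$.

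Next, define $\phi: C \to S_\epsilon$ by $\phi(x) := \gamma_x(\epsilon)$, where $\gamma_x$ is the unique geodesic from $p$ to $x$ inside $B(p, \epsilon_0)$. This is well-defined since $d(p, x) > \epsilon$ for each $x \in C$, and continuous because unique geodesics in the CAT$(\kappa)$ ball depend continuously on their endpoints. To see surjectivity, given any $q \in S_\epsilon$, iterate Lemma \ref{lem:extendible} to extend the length-$\epsilon$ geodesic $[pq]$ to a local geodesic $\tilde\gamma: [0, \epsilon_0] \to \Sigma$. The length-versus-distance inequality keeps $\tilde\gamma$ inside $B(p, \epsilon_0)$, and within this ball local geodesics are globally distance-realizing (by uniqueness), so $d(p, \tilde\gamma(t)) = t$ throughout. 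Therefore $\tilde\gamma(0) = p \in D^\circ$ while $\tilde\gamma(\epsilon_0) \notin D$, producing a first exit time $t^* \in (\epsilon, \epsilon_0)$ with $x := \tilde\gamma(t^*) \in C$, and uniqueness of geodesics gives $\phi(x) = \tilde\gamma(\epsilon) = q$. Since $C$ is path-connected, so is $\phi(C) = S_\epsilon$.

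The main obstacle is constructing the disk $D$: this is exactly where the surface hypothesis is genuinely used, and precisely the step that would fail in higher dimensions, where metric spheres (and the associated spaces of directions at a point) need not be nice topological objects, as Section \ref{sec:example} illustrates. The remaining content is standard CAT$(\kappa)$ bookkeeping: continuity of the geodesic-endpoint correspondence, and the fact that iterated extensions supplied by Lemma \ref{lem:extendible}, having length at most $\epsilon_0$, are forced to be distance-realizing from $p$ by the uniqueness of geodesics in $B(p, \epsilon_0)$.
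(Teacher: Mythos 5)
Your proof is correct and follows essentially the same route as the paper: surround $B(p,\epsilon)$ by a topological circle inside $B(p,\epsilon_0)$, then map that circle continuously onto $S_\epsilon$ via radial projection, with surjectivity supplied by geodesic extendibility (Lemma~\ref{lem:extendible}). The only cosmetic differences are that the paper works in the lift to $\tilde\Sigma\cong\mathbb R^2$ or $\mathbb S^2$ rather than in a coordinate chart, and phrases the radial map as the nearest-point projection onto the convex ball $B(p,\epsilon)$ (citing the standard $\textrm{CAT}(\kappa)$ projection lemma for continuity), whereas you build the map explicitly from unique radial geodesics and their continuous dependence on endpoints; these are the same map. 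One small caution: a \emph{Euclidean} round disk in the chart containing the (possibly irregular) metric ball $B(p,\epsilon)$ need not sit inside $\{d(p,\cdot)<\epsilon_0\}$, but a \emph{topological} disk sandwiched between the compact $B(p,\epsilon)$ and the open set always exists (e.g.\ by a standard grid argument), and that is all the argument requires.
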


\begin{proof}
$S_\epsilon$ is a closed subset of the compact metric space $\Sigma$, so it is compact and metric. Since $\epsilon<\epsilon_0$, $S_\epsilon$ lifts homeomorphically to a subset of $\tilde \Sigma$. Since $\Sigma$ is a surface, $\tilde \Sigma$ is homeomorphic to $\mathbb{R}^2$ or $\mathbb{S}^2$, so we may take $S_\epsilon$ to a be a compact subset of $\mathbb{R}^2$ or $\mathbb{S}^2$.

$S_\epsilon$ has diameter $<D_\kappa$, so we may find a path $S$ in $\mathbb{R}^2$ or $\mathbb{S}^2$ homeomorphic to $\mathbb{S}^1$ bounding a disk containing $S_\epsilon$ and remaining in $B(p,\epsilon_0)$. Let $proj:S \to S_\epsilon$ be the nearest point projection (for the $\textrm{CAT}(\kappa)$ metric lifted from $\Sigma$). This is a continuous map. Since geodesics in $\tilde \Sigma$ are infinitely extendible, for any point $z$ on $S_\epsilon$, the geodesic segment $[pz]$ extends to a geodesic which hits $S$ at a point $q$. Then $proj(q)=z$ and so this map is also surjective. The surjective, continuous map from the path-connected set $S$ to $S_\epsilon$ proves that the latter is path-connected.
\end{proof}

\begin{lem}\label{lem:pairs-separate}
For all $\epsilon<\epsilon_0$, and pairs of distinct points $\{a, b\} \subset S_\epsilon$, the space
$S_\epsilon \setminus \{a, b\}$ is disconnected.
\end{lem}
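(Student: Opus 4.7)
My plan is to construct a Jordan curve $\gamma\subset\tilde\Sigma$ (which is homeomorphic to $\mathbb{R}^2$ or $\mathbb{S}^2$) that passes through both $a$ and $b$ and meets $S_\epsilon$ only at these two points. The Jordan curve theorem will then split $\tilde\Sigma$ into two components $U_1,U_2$, and to finish I will exhibit a point of $S_\epsilon\setminus\{a,b\}$ in each.

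To build $\gamma$, fix $\epsilon'\in(\epsilon,\epsilon_0)$ and use Lemma \ref{lem:extendible} (together with the remark that geodesics extend indefinitely) to extend $[pa]$ past $a$ to a geodesic segment $[pa']$ of length $\epsilon'$, and similarly extend $[pb]$ to $[pb']$; then $a',b'\in S_{\epsilon'}$. By Lemma \ref{lem:pc} applied at radius $\epsilon'$, the sphere $S_{\epsilon'}$ is path-connected, so I can extract a simple arc $\eta\subset S_{\epsilon'}$ joining $a'$ to $b'$ from any such path (a classical fact about paths in compact Hausdorff spaces). Set $\gamma:=[pa']\cup\eta\cup[pb']$. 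Unique geodesicity inside $B(p,\epsilon_0)$ gives $[pa']\cap[pb']=\{p\}$; each geodesic arc lies in $\overline{B(p,\epsilon')}$ and meets $S_{\epsilon'}$ only at its outer endpoint, so $\gamma$ is a simple closed curve. Since distance from $p$ increases monotonically from $0$ to $\epsilon'$ along each geodesic arc, hitting $\epsilon$ only at $a$ (resp.\ $b$), and $\eta\subset S_{\epsilon'}$ is disjoint from $S_\epsilon$, one has $\gamma\cap S_\epsilon=\{a,b\}$ and therefore $S_\epsilon\setminus\{a,b\}\subseteq U_1\sqcup U_2$.

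To show both $U_1$ and $U_2$ meet $S_\epsilon\setminus\{a,b\}$, I work near $p$. Choose a topological disk $D$ around $p$ so small that $\gamma\cap D$ consists exactly of the initial segments of $[pa']$ and $[pb']$; these meet only at $p$ and jointly form a simple arc through $p$ separating $D\setminus\{p\}$ into two open sectors $D_1,D_2$, which lie in distinct global components by the standard local structure of a Jordan curve, say $D_i\subset U_i$. Using the 2-manifold structure of $\Sigma$ near $p$, the space of geodesic directions at $p$ has elements beyond the two directions of $[pa],[pb]$, so I can pick $q_i\in D_i$ close enough to $p$ that $[pq_i]\subset D$ and such that $[pq_i]$ has initial direction at $p$ different from those of $[pa'],[pb']$. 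Then $[pq_i]$ shares no initial segment with $[pa']$ or $[pb']$, and unique geodesicity rules out any later crossing with them, forcing $[pq_i]\setminus\{p\}\subset D_i$. Extending $[pq_i]$ via Lemma \ref{lem:extendible} to a geodesic $[pz_i]$ of length $\epsilon$, the extension stays inside $\overline{B(p,\epsilon)}\subset B(p,\epsilon')$ and therefore avoids $\eta$, while unique geodesicity continues to rule out crossings with $[pa']$ or $[pb']$. Thus $[pz_i]\setminus\{p\}$ is a connected subset of $\tilde\Sigma\setminus\gamma$ containing points of $D_i\subset U_i$, so $z_i\in U_i\cap(S_\epsilon\setminus\{a,b\})$.

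The delicate step will be the choice of $q_i$ with an initial geodesic direction at $p$ distinct from those of $[pa],[pb]$: otherwise branching at an interior cone point of $[pa']$ or $[pb']$ could cause $[pq_i]$ to share a nontrivial initial segment with one of these arcs, placing part of $[pq_i]$ on $\gamma$ rather than in $D_i$. Handling this relies on the 2-manifold structure at $p$ providing a nontrivial space of directions; everything else in the argument is routine planar-topology bookkeeping around the Jordan curve $\gamma$.
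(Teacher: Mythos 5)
Your construction hinges on the claim that ``Unique geodesicity inside $B(p,\epsilon_0)$ gives $[pa']\cap[pb']=\{p\}$,'' and this is where the argument breaks down. Uniqueness of geodesics in $B(p,\epsilon_0)$ says that for any two points there is exactly one geodesic between them; it does \emph{not} prevent two geodesics emanating from $p$ from sharing a nontrivial initial segment. In a $\textrm{CAT}(\kappa)$ surface, geodesics from $p$ can branch at an interior cone point $w$ of excess angle (total angle $>2\pi$), and nothing in the hypotheses rules out $a$ and $b$ both lying beyond such a $w$, in which case $[pa]\cap [pb]\supseteq [pw]$ with $w\neq p$, and likewise $[pa']\cap[pb']\supseteq[pw]$. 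The paper is explicit about exactly this possibility (``Note that $[pa'],[pb']$ might coincide on some subgeodesic originating from $p$. Let $w$ be the point at which these two geodesic segments separate\ldots''). When overlap occurs, your $\gamma=[pa']\cup\eta\cup[pb']$ is a lollipop, not a Jordan curve, so the Jordan curve theorem does not apply; and the subsequent local analysis in a small disk $D$ around $p$ (two arcs meeting only at $p$, splitting $D\setminus\{p\}$ into two sectors) is false as stated, because near $p$ the set $\gamma\cap D$ would be a single arc with endpoint $p$.

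This is not merely a bookkeeping issue: repairing it would force you to replace $\gamma$ by the genuine Jordan curve $[wa']\cup\eta\cup[wb']$, and then $p$ is no longer on $\gamma$ (it sits strictly inside one component), so your mechanism for producing a point of $S_\epsilon$ in \emph{each} component via geodesics from $p$ collapses. The paper sidesteps all of this by arguing by contradiction: it assumes one component $U_1$ misses $S_\epsilon$ entirely and then runs an intermediate-value argument for $d(p,\cdot)$ along an arc inside $U_1$ from $w$ to $a'$, whose endpoints satisfy $d(p,w)<\epsilon<d(p,a')$. That IVT step is robust to the overlap and avoids having to locate explicit points of $S_\epsilon$ on both sides. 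Your flagged worry (branching when choosing $q_i$) is a secondary symptom of the same phenomenon; the primary gap is the unjustified disjointness $[pa']\cap[pb']=\{p\}$.
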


\begin{proof}
As in the proof of Lemma \ref{lem:pc}, let $S$ in $\mathbb{R}^2$ or $\mathbb{S}^2$ be a Jordan curve containing $S_\epsilon$ and remaining in $B(p,\epsilon_0)$. In particular, from the Schoenflies 
theorem, we can now view $S_\epsilon$ as contained inside a closed topological disk $\mathbb D^2$ (the curve $S$ along with its interior).
Given the two distinct points $a, b\in S_\epsilon$, extend the two geodesic segments $[pa], [pb]$
to geodesic segments $[pa'], [pb']$, where $a', b'$ are a pair of (distinct) points on $S$, and 
$[pa), [pb)$ are both contained in the interior of $\mathbb D^2$.

Note that $[pa'], [pb']$ might coincide on some subgeodesic originating from $p$. Let $w$ be the
point at which these two geodesic segments separate, and consider the concatenation $[a'w]$
with $[wb']$. By uniqueness of geodesics in $B(p,\epsilon_0)$, we have that
$[a'w]\cap [wb'] = \{w\}$, and hence they concatenate to give an embedded arc joining the pair
of points $a', b' \in S = \partial \mathbb D^2$. Moreover, the interior $(a'b')$ of the arc is contained
in the interior of $\mathbb D^2$. In follows that $(a'b')$ separates the interior of the disk into two
connected components $U_1, U_2$. (See Figure \ref{fig:circle}.)

%%%%%%%%%%%%%%%%%%%%

\begin{center}

\setlength{\unitlength}{.5pt}

\begin{picture}(600,500)(-300,-200)

\put(0,-100){\circle*{5}}
\put(0,-185){\circle*{5}}

\put(-15, -200){$p$}

\put(0,-185){\line(0,1){85}}

\put(-25,-105){$w$}

\qbezier(-200,0)(0,170)(200,0)

\put(-200,35){$S_\epsilon$}

\put(0,-100){\line(-1,2){200}}
\put(0,-100){\line(1,2){180}}

\put(-85,70){\circle*{5}}
\put(85,70){\circle*{5}}

\put(-100,45){$a$}
\put(90,45){$b$}

\qbezier(-200,250)(0,300)(-20,250)
\qbezier(-20,250)(-60,200)(200,200)

\put(-178,257){\circle*{5}}
\put(150,201){\circle*{5}}

\put(-195,230){$a'$}
\put(152,175){$b'$}

\put(190,210){$S$}

\put(-180,140){$[a'w]$}
\put(130,140){$[wb']$}

\put(-10,140){$U_1$}
\put(150,-100){$U_2$}

\end{picture}

\begin{figure}[h]
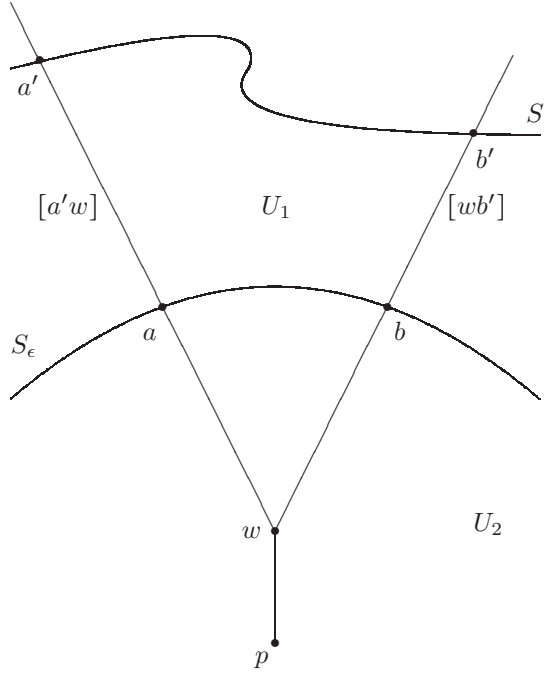

\caption{Proving $S_\epsilon$ is a circle.}\label{fig:circle}
\end{figure}

\end{center}

%%%%%%%%%%%%%%%%%%%%

Now by way of contradiction, let us assume $S_\epsilon \setminus \{a, b\}$ is connected. Then
without loss of generality, $S_\epsilon \cap U_1$ must be empty. On the other hand, $U_1$ is
homeomorphic to an open disk, whose boundary is a Jordan curve (formed by the arc $(a'b')$ 
in the interior of $\mathbb D^2$, along with the portion of the boundary $S$ joining $a'$ to $b'$).
The boundary of $U_1$ contains the arc $(wa')$ passing through $a$, and the distance to $p$
varies continuously along $(wa')$ from a number $< \epsilon$ (since $w\neq a$) to a number
$> \epsilon$ (since $a \neq a'$). Pick an arc $\eta$ {\it inside} $U_1$ joining $w$ to $a'$, and 
consider the distance function restricted to $\eta$. It varies continuously from $<\epsilon$ to 
$>\epsilon$, but since $S_\epsilon \cap U_1=\emptyset$, is never equal to $\epsilon$. This is
a contradiction, completing the proof.
\end{proof}

Using the topological characterization of $\mathbb S^1$, Proposition \ref{prop:circle} now follows
immediately from Lemma \ref{lem:pc} and Lemma \ref{lem:pairs-separate}.

\

%
%%
%%%
%%%%
%%%%%
%%%%%%%%%%%%%%%%%%%%%%%%%%%%%%
\section{The geometry of small distance spheres}\label{sec:rectifiable}
%%%%%%%%%%%%%%%%%%%%%%%%%%%%%%

We now want to prove that for all $p\in \Sigma$ and $\epsilon<\epsilon_0$, $S_\epsilon$ is rectifiable, and that the lengths of these circles can be uniformly bounded above.

\begin{lem}\label{lem:finite}
Let $\Sigma$ be a complete $\textrm{CAT}(\kappa)$ surface. Then for any $z\in \Sigma$, and any $\epsilon<\epsilon_0$, $S(z,\epsilon)$ is a rectifiable curve.
\end{lem}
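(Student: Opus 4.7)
The plan is to bound the length of $S(z,\epsilon)$ via the nearest-point projection onto the convex ball $B(z,\epsilon)$. Since $\epsilon < \epsilon_0 \leq D_\kappa/2$, the ball $B(z,\epsilon)$ is convex inside $B(z,\epsilon_0)$, so the nearest-point projection $\pi\colon B(z,\epsilon_0) \to B(z,\epsilon)$ is well-defined and 1-Lipschitz; moreover, for any $x \in B(z,\epsilon_0) \setminus B(z,\epsilon)$, $\pi(x)$ is the unique point where the geodesic $[zx]$ meets $S(z,\epsilon)$. It therefore suffices to exhibit a rectifiable closed curve $\Gamma$ lying in the annulus $B(z,\epsilon_0) \setminus B(z,\epsilon)$ whose image under $\pi$ covers $S(z,\epsilon)$.

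To construct $\Gamma$, I would pick $\epsilon_1 \in (\epsilon,\epsilon_0)$ and use Proposition \ref{prop:circle}: $S(z,\epsilon_1)$ is a topological circle, so I choose points $q_1, \ldots, q_n$ on it in cyclic order and form the geodesic polygon $\Gamma = \bigcup_i [q_iq_{i+1}]$ (indices mod $n$). The segments are unique by the CAT($\kappa$) condition on $B(z,\epsilon_0)$, and $\mathrm{length}(\Gamma) \leq \sum_i d(q_i,q_{i+1}) < \infty$. For $n$ sufficiently large, two properties hold: (i) each segment $[q_iq_{i+1}]$ remains in $B(z,\epsilon_0) \setminus B(z,\epsilon)$, and (ii) $\Gamma$ has winding number $\pm 1$ around (a lift of) $z$ in $\tilde\Sigma$. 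Property (i) follows from CAT($\kappa$) comparison together with the law of cosines in $M_\kappa^2$: the perpendicular distance from the center to a chord of the model $\epsilon_1$-sphere tends to $\epsilon_1 > \epsilon$ as the chord length shrinks, and this estimate transfers to $\Sigma$. Property (ii) holds because, as the partition of $S(z,\epsilon_1)$ is refined, $\Gamma$ converges to $S(z,\epsilon_1)$ in Hausdorff distance and so inherits its winding number.

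To finish, I would show $\pi(\Gamma) \supseteq S(z,\epsilon)$. For any $y \in S(z,\epsilon)$, Lemma \ref{lem:extendible} allows me to extend $[zy]$ beyond $y$ to a long geodesic ray from $z$. By (ii), this ray must cross $\Gamma$ at some point $x$ with $d(z,x) > \epsilon$, since a closed curve of nonzero winding number around $z$ is hit by every ray from $z$ (using the angular coordinate in $\tilde\Sigma \cong \mathbb{R}^2$ or $\mathbb{S}^2$). Then $\pi(x) = y$, as claimed. Combining the 1-Lipschitz property of $\pi$ with this surjectivity yields $H^1(S(z,\epsilon)) \leq H^1(\pi(\Gamma)) \leq \mathrm{length}(\Gamma) < \infty$, proving rectifiability. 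The main technical step is verifying property (i); a minor subtlety is that $\Gamma$ may self-intersect, but this is harmless since we only need $\pi(\Gamma) \supseteq S(z,\epsilon)$ and not that $\Gamma$ itself is simple.
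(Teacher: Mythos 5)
Your approach is genuinely different from the paper's. The paper argues by contradiction: assuming $S_\epsilon$ has infinite length, it introduces an equivalence relation on a half-circle $arc_E(1)$ (declaring $x\sim y$ when some inner arc between their radial projections has finite length), locates a boundary point $z^*$ of an equivalence class, and shows that geodesics from nearby non-equivalent points $q$ to $z^*$ are forced to pass through $p$, a contradiction. Your approach is direct and constructive: exhibit an explicit rectifiable ``outer'' curve $\Gamma$ and push it onto $S(z,\epsilon)$ by the $1$-Lipschitz projection. This is conceptually cleaner and has the advantage of directly producing a length estimate, which is ultimately what Lemma~\ref{lem:uniform} and the upper bound in Theorem~\ref{main thm} need. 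However, there are two gaps in the justification.

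First, your justification of property (i) via ``CAT($\kappa$) comparison together with the law of cosines'' runs in the wrong direction. The CAT($\kappa$) inequality says that distances inside a geodesic triangle are \emph{at most} the corresponding distances in the comparison triangle in $M_\kappa$; applied to the triangle $\triangle(z,q_i,q_{i+1})$ and a point $x\in[q_iq_{i+1}]$, it gives $d(z,x)\le d_{M_\kappa}(\bar z,\bar x)$, an \emph{upper} bound on $d(z,x)$, whereas you need a \emph{lower} bound to keep the chord out of $B(z,\epsilon)$. Comparison with $M_\kappa$ simply cannot supply that in a space with an upper curvature bound. Fortunately the claim itself survives for a much more elementary reason: by the triangle inequality, for any $x\in[q_iq_{i+1}]$ one has $d(z,x)\ge d(z,q_i)-d(q_i,x)\ge \epsilon_1-d(q_i,q_{i+1})$, so choosing the inscribed points with $d(q_i,q_{i+1})<\epsilon_1-\epsilon$ suffices. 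You should replace the comparison argument with this.

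Second, property (ii) is asserted but not proved. The statement that $\Gamma$ ``converges to $S(z,\epsilon_1)$ in Hausdorff distance and so inherits its winding number'' is not a valid inference: Hausdorff convergence of compact sets to a Jordan curve does not control the winding number of the approximating loops (a sequence of figure-eight-like loops can Hausdorff-converge to a circle while having winding number $0$). What actually makes your construction work is the specific structure of $\Gamma$: one can radially retract $\Gamma$ onto $S(z,\epsilon_1)$ via the geodesic contraction toward $z$, reducing the question to the degree of the resulting loop in $S(z,\epsilon_1)\cong\mathbb S^1$; then one must show that for a sufficiently fine inscribed polygon, each radial image $\rho_{\epsilon_1}([q_iq_{i+1}])$ has small diameter (uniform continuity of $\rho_{\epsilon_1}$ on the compact annulus) and therefore covers exactly the short arc between $q_i$ and $q_{i+1}$ in the lift, giving total degree $\pm 1$. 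This takes real work with the uniform continuity moduli of $\rho_{\epsilon_1}$ and of a parametrization $\phi$ of $S(z,\epsilon_1)$, and should not be waved through. Alternatively, you could avoid winding number entirely and argue directly that $\rho_{\epsilon_1}(\Gamma)=S(z,\epsilon_1)$, since $\rho_{\epsilon_1}([q_iq_{i+1}])$ is a connected arc containing $q_i,q_{i+1}$ which, once its diameter is small enough, must contain the short arc $A_i$; but this still requires the uniform-continuity estimate. Once (ii) is established rigorously, the rest of your argument---extending $[zy]$ to hit $\Gamma$, then applying the $1$-Lipschitz nearest-point projection $\pi$ and the bound $H^1(S(z,\epsilon))\le H^1(\pi(\Gamma))\le\operatorname{length}(\Gamma)$---is correct, and the remark that self-intersections of $\Gamma$ are harmless is also correct.
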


\begin{proof}
Fix $p$. By Proposition \ref{prop:circle}, for $\epsilon < \epsilon_0$, $S_\epsilon$ is homeomorphic to a circle.

Note that if $\epsilon'<\epsilon$, the rectifiability of $S_\epsilon$ implies that $S_{\epsilon'}$ is also rectifiable. This follows from the fact that the nearest-point projection $\pi_Z$ to a complete, convex subset $Z$ is distance non-increasing a ball of radius $<\epsilon_0$ in a $\textrm{CAT}(\kappa)$ space (see, e.g. \cite[Prop. II.2.4 (or the exercise following for $\kappa>0$)]{bh}). Applying this to the complete convex subset $Z:= B(p, \epsilon')$, and using the (global) $\textrm{CAT}(\kappa)$ geometry in $B(p,\epsilon_0)$, we see that $\pi_Z$ is just the radial projection towards $p$. In particular the image of $\pi_Z$ lies on $S_{\epsilon'}$.

%%%%%%%%%%%%%%%%%%%%

\begin{center}

\setlength{\unitlength}{.5pt}

\begin{picture}(400,410)(-200,-200)

\put(0,0){\circle*{5}}
\put(0,200){\circle*{5}}
\put(0,-200){\circle*{5}}

\put(-15,-15){$p$}
\put(-55, -215){$S(1)$}
\put(-55, 175){$N(1)$}

\put(0,0){\circle{300}}

\put(0,-200){\line(0,1){400}}

\qbezier(0,200)(-190,190)(-200,0)	
\qbezier(-200,0)(-190,-190)(0,-200)	
\qbezier(0,-200)(190,-190)(200,0)	
\qbezier(200,0)(190,190)(0,200)

\put(-85,0){$S_{\epsilon/n}$}
\put(-225,50){$S_{\epsilon}$}

\put(-260,-100){$arc_W(1)$}
\put(190,-100){$arc_E(1)$}
\put(30,-40){$arc_E(n)$}

\put(0,40){\circle*{5}}
\put(0,-40){\circle*{5}}

\put(-50,50){$N(n)$}
\put(-50,-60){$S(n)$}

\put(0,0){\line(1,2){90}}
\put(90,180){\circle*{5}}
\put(0,0){\line(2,1){180}}
\put(180,90){\circle*{5}}

\put(90,190){$z^*$}
\put(190,90){$q$}

\qbezier(90,180)(20,20)(180,90)

\put(80,90){$[z^*q]$}

\end{picture}

\begin{figure}[h]
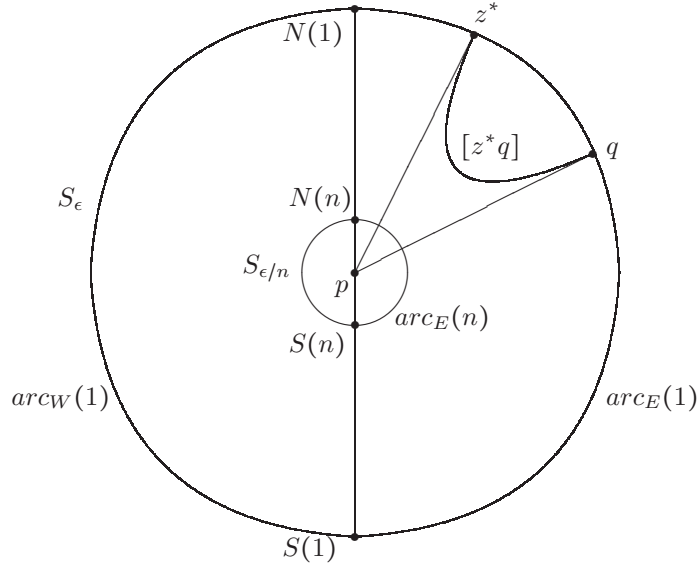

\caption{A geodesic configuration which cannot occur if $S_\epsilon$ is not rectifiable.}\label{fig:rect}
\end{figure}

\end{center}

%%%%%%%%%%%%%%%%%%%%

Next, fix any geodesic $\gamma$ through $p$ and denote by $N(n)$ and $S(n)$ its two intersections with $S_{\frac{\epsilon}{n}}$ (chosen so that all $N(n)$ lie on the same component of $\gamma\setminus\{p\}$). Since $S_{\frac{\epsilon}{n}}$ is a circle, the pair $\{N(n), S(n)\}$ divides $S_{\frac{\epsilon}{n}}$ into two arcs, whose closures we call $arc_E(n)$ and $arc_W(n)$; choose these so that the relative positions of $N(n), S(n), arc_E(n)$ and $arc_W(n)$ correspond to the cardinal directions on a compass.

Note that for all $n$, at least one of $l(arc_E(n)), l(arc_W(n))$ must be infinite, since $l(S_n)$ is infinite. Without loss of generality, we may assume that $l(arc_E(n))=\infty$ for infinitely many $n$, and hence (by the remark above) for all $n$. We focus our attention now on the family $\{arc_E(n)\}_{n\in \mathbb{N}}$.

On $arc_E(1)$ define the following equivalence relation: we declare $x\sim y$ if there exists some $n$ such that the arc in $arc_E(n)$ with endpoints $[px]\cap arc_E(n)$ and $[py]\cap arc_E(n)$ is of finite length. That this is an equivalence relation is easy to check. We denote equivalence classes by $[x]$.

We note two things about $arc_E(1)/\sim$ and its equivalence classes. First, $N(1) \nsim S(1)$, for otherwise $arc_E(n)$ would have finite length for some $n$. Second, for each $x\in arc_E(1)$, $[x]$ is an interval (possibly degenerate). This is because geodesics are unique at the scale we work at, and if three points are arranged around $arc_E(1)$ in order $x<y<z$, then $[px]\cap arc_E(n)\leq [py]\cap arc_E(n)\leq [pz]\cap arc_E(n)$. Thus the decomposition of $arc_E(1)$ into the equivalence classes of $\sim$ is a decomposition into at least two disjoint subintervals (possibly degenerate) of the half-circle $arc_E(1)$.

By connectedness of $arc_E(1)$, either $[N(1)]$ or $[S(1)]$ is a singleton, or some equivalence class has a closed endpoint in the interior of $arc_E(1)$. Let $z^*$ be this endpoint or the singleton $N(1)$ or $S(1)$.

If $z^*$ is an endpoint of $arc_E(1)$, let $q$ be any other point in $arc_E(1)$. If $z^*$ is the closed endpoint of $[z^*]$ in the interior of $arc_E(1)$, let $q$ be any point in $arc_E(1)$ which lies on the $z^*$-side of $[z^*]$. We note that there are infinitely many such $q$, and by the choice of $z^*$ and the topology of the half-circle $arc_E(1)$, we may take a sequence of such $q$ approaching $z^*$. Observe that, since $q$ and $z^*$ are not equivalent, the geodesic segments $[pz^*]$ and $[pq]$ only agree at the point $p$.

Consider the geodesic segment $[z^*q]$. By the properties of geodesics in the (globally) $\textrm{CAT}(\kappa)$ set $B(p,\epsilon)$, this geodesic segment lies inside $B(p,\epsilon)$ and does not cross the geodesic $\gamma$ which divides the West and East parts of $B(p,\epsilon)$. Suppose that $[z^*q]$ does not intersect $arc_E(n)$ for some $n$ (as in Figure \ref{fig:rect}). Then the radial projection of $[z^*q]$ onto $S_n$ provides a path in $arc_E(n)$ from $[pz^*]\cap arc_E(n)$ to $[pq] \cap arc_E(n)$. Again by the distance non-increasing properties of the projection, since $[z^*q]$ has finite length, this would imply $z^*\sim q$, which contradicts the choice of $q$. Therefore the geodesic segment $[z^*q]$ must intersect $arc_E(n)$ for all $n$. It must therefore hit $p$, and by uniqueness of geodesics we conclude that $[z^*q] = [z^*p] \cup [pq]$.

Now consider $B(z^*,\epsilon)$. The work above shows that no $q$ chosen as previously described lies in $B(p^*,\epsilon)$. But this contradicts our observation above that, using the half-circle topology of $arc_E(1)$, we may take such $q$ approaching $p^*$. This contradiction concludes the proof.
\end{proof}

Let us denote by $l(\gamma)$ the length of a rectifiable curve $\gamma$. Using Lemma \ref{lem:finite} and the compactness of $\Sigma$ we have:

\begin{lem}\label{lem:uniform}
There exist $\delta_0>0$ and some uniform $C>0$ such that for all $z\in \Sigma$, $l(S(z,\delta_0))<C$.
\end{lem}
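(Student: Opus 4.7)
The plan is to combine Lemma~\ref{lem:finite} with a local comparison of sphere lengths and the compactness of $\Sigma$. The heart of the argument is to show that if $z'$ is sufficiently close to $z$ and $s<\delta<\epsilon_0$ are chosen with enough slack, then $l(S(z',s)) \leq l(S(z,\delta))$. Granting this, the conclusion is quick: for each $z \in \Sigma$ pick $\delta_z < \epsilon_0$ and a small neighborhood $U_z$ of $z$ on which a companion radius $s_z > 0$ works; extract a finite subcover $\{U_{z_i}\}_{i=1}^N$ by compactness, and set $\delta_0 := \min_i s_{z_i}$ and $C := 1 + \max_i l(S(z_i,\delta_{z_i}))$. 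The monotonicity $l(S(z',\cdot))$ is non-decreasing in the radius (an immediate consequence of the radial-projection argument already invoked inside Lemma~\ref{lem:finite}) then yields $l(S(z',\delta_0)) \leq l(S(z',s_{z_i})) \leq l(S(z_i,\delta_{z_i})) < C$ whenever $z' \in U_{z_i}$.

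For the local comparison, fix $z,z',\delta,s$ with $d(z,z') < r$ and $s < \delta - r$, so that $S(z,\delta)$ lies strictly outside the convex ball $B(z',s)$. The nearest-point projection $\pi$ onto $B(z',s)$ is $1$-Lipschitz at this scale (the CAT$(\kappa)$ fact cited in Lemma~\ref{lem:finite}), and its restriction to $S(z,\delta)$ takes values in $S(z',s)$. Parameterize $S(z,\delta)$ by arc length as $\gamma \colon [0,L] \to S(z,\delta)$ with $L = l(S(z,\delta))$; the composition $\pi \circ \gamma$ is then a $1$-Lipschitz loop in $S(z',s)$ of total length at most $L$. By Lemma~\ref{lem:extendible} (and the remark following it), every geodesic ray from $z'$ can be extended until it meets $S(z,\delta)$, so each point of $S(z',s)$ lies on such a radial segment and is therefore in the image of $\pi$; hence $\pi$ and $\pi\circ\gamma$ are surjective.

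Both $S(z,\delta)$ and $S(z',s)$ are topological circles by Proposition~\ref{prop:circle}, so $\pi \circ \gamma$ defines a continuous surjection $S^1 \to S^1$. Such a map cannot be null-homotopic, so it has nonzero degree $k$; lifting $\pi \circ \gamma$ to the universal cover $\mathbb{R}$ of the target circle shows that $l(\pi\circ\gamma) \geq |k|\cdot l(S(z',s)) \geq l(S(z',s))$. Combining with $l(\pi\circ\gamma) \leq L$ yields the desired $l(S(z',s)) \leq l(S(z,\delta))$. The main subtlety is precisely this degree step: one must rule out that the $1$-Lipschitz surjection $\pi \circ \gamma$ could backtrack pathologically in $S(z',s)$ and escape the length comparison, and this is exactly what nonzero degree prevents. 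Everything else --- the CAT$(\kappa)$ convexity estimates, geodesic extendibility, and the finite subcover --- is either standard or already in place earlier in the paper.
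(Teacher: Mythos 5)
Your proof takes essentially the same route as the paper's --- the key tool in both is the $1$-Lipschitz nearest-point (radial) projection between distance spheres, followed by compactness of $\Sigma$. The paper phrases the compactness step as a contradiction (a sequence $p_n$ with $l(S(p_n,\delta_0/2)) \to \infty$ would force $l(S(p^*,\delta_0)) = \infty$ at a subsequential limit $p^*$, contradicting Lemma~\ref{lem:finite}), while you run a direct finite-subcover argument; the underlying comparison is the same. You are also right to flag that ``$\pi$ is $1$-Lipschitz and surjective'' does not \emph{immediately} give $l(S(z',s)) \le l(S(z,\delta))$ --- the paper passes over this without comment.

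However, your justification of that step has a gap. A continuous surjection $S^1 \to S^1$ need \emph{not} have nonzero degree: for instance $t \mapsto e^{4\pi i \sin(2\pi t)}$ is a degree-zero surjection of the unit circle onto itself. So ``such a map cannot be null-homotopic'' is false as stated, and nonzero degree has to come from the geometry, not from surjectivity alone. Three ways to repair it. (a) The nearest-point projection onto $B(z',s)$ restricts to a retraction of the closed annulus $B(z,\delta)\setminus B(z',s)^{\circ}$ onto its inner boundary $S(z',s)$; its restriction to the outer boundary $S(z,\delta)$ is therefore homotopic, through the annulus, to the identity on $S(z',s)$, hence has degree $\pm 1$. (b) Skip degree: for a $1$-Lipschitz surjection $f$ from a circle of length $L$ onto one of length $L'$, any lift $\tilde f\colon [0,L]\to \mathbb{R}$ is $1$-Lipschitz with range of length $\ge L'$ by surjectivity, so $L' \le L$ whatever the degree. (c) Simplest, and closest to what the paper implicitly uses: $l(S(z',s)) = H^1(S(z',s)) \le l(\pi\circ\gamma) \le l(\gamma)$, since $S(z',s)$ is a simple rectifiable curve and the $1$-dimensional Hausdorff measure of the image of any rectifiable curve is bounded by its length. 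Any of these closes the gap, and the rest of your argument is sound.
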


\begin{proof}

Suppose there is no uniform bound on $l(S(z,\delta_0/2))$. Then we may take a sequence of points $p_n$ in $\Sigma$ with $l(S(p_n,\delta_0/2))\geq n$. Let $p^*$ be any subsequential limit point of $(p_n)$ and note that $B(p^*, \delta_0)$ properly contains $S(p_n,\delta_0/2)$ for sufficiently large $n$. The unbounded lengths of the latter, plus again the distance non-increasing properties of nearest-point projection, would imply that the length of $S(p^*,\delta_0)$ is infinite, contradicting Lemma \ref{lem:finite}. This proves the Lemma.
\end{proof}

\
%
%%
%%%
%%%%
%%%%%%%%%%%%%%%%%%%%%
\section{Proof of Theorem \ref{main thm}}\label{sec:proof}
%%%%%%%%%%%%%%%%%%%%%

We define a particular non-expanding map from $B(p,\epsilon_0)$ to the ball of radius $\epsilon_0$ in the model space $\mathbb{H}^2$. This will be a key tool in our proof of Theorem \ref{main thm}.

Define an equivalence relation on the set of geodesic segments starting at $p$ by declaring $\gamma_1\sim\gamma_2$ if the Alexandrov angle between these segments at $p$ is 0.

\begin{defn}(See, e.g. \cite[Definition II.3.18]{bh})
The set of equivalence classes for $\sim$, equipped with the metric provided by the Alexandrov angle, is the \emph{space of directions at $p$}, denoted $S_p(\Sigma)$.
\end{defn}

The following result is standard:

\begin{prop}(See, e.g. \cite[Theorem II.3.19]{bh})\label{prop:cat(1)}
If $\tilde \Sigma$ is $\textrm{CAT}(\kappa)$ for any $\kappa$, then for each $p\in\tilde \Sigma$, the completion of $S_p(\tilde \Sigma)$ is $\textrm{CAT}(1)$.
\end{prop}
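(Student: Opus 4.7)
The plan is to derive the $\textrm{CAT}(1)$ property of the completion of $S_p(\tilde\Sigma)$ from the standard equivalence between Euclidean cones and their bases (see, e.g., Bridson-Haefliger I.5.10): a complete metric space $Y$ of diameter at most $\pi$ is $\textrm{CAT}(1)$ if and only if the Euclidean cone $CY$ is $\textrm{CAT}(0)$. Accordingly, I would work with the tangent cone $T_p(\tilde\Sigma)$ at $p$ and prove it is $\textrm{CAT}(0)$; the proposition would then follow by identifying the unit sphere in $T_p$ with the completion of $S_p$.

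The first ingredient is to check that the Alexandrov angle is genuinely a metric on $S_p$. Symmetry and positive-definiteness (after passing to equivalence classes) are immediate from the definition. The triangle inequality $\angle(c_1,c_3)\leq \angle(c_1,c_2)+\angle(c_2,c_3)$ for three geodesic segments at $p$ is standard; it follows from the spherical triangle inequality applied to comparison angles, combined with the monotonicity of comparison angles in the $\kappa$-model as one slides toward $p$ along the segments (itself an easy consequence of the $\textrm{CAT}(\kappa)$ inequality).

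The core step is to realize $T_p(\tilde\Sigma)$ as the pointed Gromov-Hausdorff limit of the rescaled balls $(B(p,\epsilon_0),\,\lambda\cdot d,\,p)$ as $\lambda\to\infty$. Each rescaled ball is $\textrm{CAT}(\kappa/\lambda^2)$, and since the $\textrm{CAT}(\kappa)$ condition passes to the limit under pointed Gromov-Hausdorff convergence, the limit is $\textrm{CAT}(0)$. I would then identify this limit explicitly with the Euclidean cone over the completion of $S_p$: a point of $T_p$ is an equivalence class $(\xi,t)$ with $\xi$ a direction and $t\geq 0$, and the distance is $\sqrt{s^2+t^2-2st\cos\angle(\xi,\eta)}$. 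The key verification is that $\lambda\cdot d(c_1(s/\lambda),c_2(t/\lambda))$ converges to this cone distance as $\lambda\to\infty$, which is essentially the first-variation formula and in a $\textrm{CAT}(\kappa)$ setting reduces once more to the monotonicity of comparison angles.

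The main obstacle is the technical bookkeeping around the Gromov-Hausdorff limit: one must verify that the cone metric on equivalence classes really agrees with the limiting distance, handle the completion carefully (since not every direction need have a geodesic representative, and the limit sphere is precisely the completion of $S_p$), and confirm that the triangle comparisons survive the passage to the limit. Once $T_p$ is seen to be a $\textrm{CAT}(0)$ cone over $\overline{S_p}$, the cone/base equivalence delivers the $\textrm{CAT}(1)$ conclusion.
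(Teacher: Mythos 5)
The paper does not prove this proposition; it states it with a citation to Bridson--Haefliger II.3.19 (Nikolaev's theorem), so the right comparison is against that standard proof. Your outline reproduces its two essential ingredients --- the triangle inequality for the Alexandrov angle on $S_p$, and Berestovskii's cone theorem reducing the claim to $\textrm{CAT}(0)$-ness of the tangent cone --- and then replaces Bridson--Haefliger's direct limiting comparison of the small rescaled triangles spanned by $c_1(t), c_2(t), c_3(t)$ as $t\to 0$ with a pointed Gromov--Hausdorff blow-up of rescaled balls. This is a legitimate and conceptually clean repackaging of the same scaling idea, but be aware that it relocates rather than removes the technical content: the identification of the blow-up limit with the Euclidean cone over the \emph{completion} of $S_p$ is precisely where the quantitative comparison estimates reside, and the GH framework further presupposes a form of local compactness that holds for $\tilde\Sigma$ (it is proper, being the universal cover of a closed surface) but can fail for a general $\textrm{CAT}(\kappa)$ space --- one reason the standard proof avoids GH language. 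You correctly flag the blow-up identification as the main obstacle; the sketch is sound, and the diameter-$\leq\pi$ hypothesis you invoke is automatically satisfied since Alexandrov angles take values in $[0,\pi]$.
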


\begin{lem}\label{lem:Spcomplete}
For any $p\in \tilde \Sigma$, where $\tilde \Sigma$ is a $\textrm{CAT}(\kappa)$ surface, $S_p(\tilde \Sigma)\cong\mathbb{S}^1$.
\end{lem}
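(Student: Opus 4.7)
Plan. My plan is to fix $\epsilon<\epsilon_0$ and define a natural map $f:S(p,\epsilon)\to S_p(\tilde\Sigma)$ by $f(z)=[pz]$, the equivalence class of the unique geodesic from $p$ to $z$. I will show $f$ is a continuous surjection, and then verify that $S_p(\tilde\Sigma)$ satisfies the topological characterization of the circle used in Proposition \ref{prop:circle}: a compact, connected, metric space in which every pair of distinct points disconnects the space.

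First, continuity of $f$: for $z_n\to z$, the CAT($\kappa$) comparison inequality bounds the Alexandrov angle $\angle_p([pz_n],[pz])$ above by the angle at the corresponding vertex of the $\kappa$-model comparison triangle with sides $\epsilon$, $\epsilon$, and $d(z_n,z)$. As $d(z_n,z)\to 0$ this upper bound tends to $0$. Surjectivity of $f$ follows from Lemma \ref{lem:extendible} together with the subsequent remark on indefinite extension: any geodesic segment representing a direction at $p$ can be extended to have length at least $\epsilon$, and its endpoint provides a preimage in $S(p,\epsilon)$.

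With $f$ continuous and surjective, $S_p(\tilde\Sigma)$ inherits compactness and connectedness from $S(p,\epsilon)\cong\mathbb{S}^1$, and it is Hausdorff via the Alexandrov angle metric. It remains to verify the pair-separation property. Given distinct $v_1,v_2\in S_p(\tilde\Sigma)$, the fibers $F_i:=f^{-1}(v_i)\subset S(p,\epsilon)$ are disjoint closed subsets. The key geometric fact I need is that each $F_i$ is connected in $S(p,\epsilon)$: a single direction at $p$ may correspond to a whole family of length-$\epsilon$ geodesics that coincide on an initial segment and then branch at interior cone points of excess angle, but in the 2-dimensional CAT setting the admissible continuations past each branching point form a connected 1-parameter family, yielding a connected arc of endpoints in $S(p,\epsilon)$. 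Granting this, $S(p,\epsilon)\setminus(F_1\cup F_2)$ decomposes into two open arcs, whose $f$-images furnish the required disconnection of $S_p(\tilde\Sigma)\setminus\{v_1,v_2\}$.

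The main obstacle will be establishing the connectedness of the fibers of $f$: I must show that if two geodesics from $p$ share a common initial direction, then all geodesics from $p$ to points on the arc of $S(p,\epsilon)$ between their endpoints share that direction as well. This requires carefully combining the CAT($\kappa$) geometry of branching at cone points with the 2-dimensional topology of $\tilde\Sigma$, using in an essential way that $B(p,\epsilon_0)$ is uniquely geodesic and that the space of outgoing directions at any branching cone point is an arc. Once this is in hand, the pair-separation criterion produces $S_p(\tilde\Sigma)\cong\mathbb{S}^1$.
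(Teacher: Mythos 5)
Your approach is the same as the paper's: both consider the map $f\colon S(p,\epsilon)\to S_p(\tilde\Sigma)$ sending $z\mapsto[pz]$, establish continuity and surjectivity (the latter via Lemma \ref{lem:extendible}), and then reduce to showing the fibers of $f$ are connected arcs. The finishes differ only cosmetically -- the paper invokes the standard fact that $\mathbb S^1$ modulo an upper semicontinuous decomposition into points and closed arcs is again $\mathbb S^1$, while you unwind this into the pair-separation criterion -- and your version works, \emph{given} fiber connectedness: that is what guarantees any fiber disjoint from $F_1\cup F_2$ lies entirely in one complementary arc, so that $f(A_1)$ and $f(A_2)$ are disjoint and saturated, hence open, and separate $S_p(\tilde\Sigma)\setminus\{v_1,v_2\}$.

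The gap is in exactly the step the paper labels ``easily seen'': that the fibers of $f$ are connected. Your sketch assumes geodesics in a common fiber must agree on an initial segment and then branch at a cone point, with the admissible continuations forming a connected one-parameter family. Neither claim is justified as stated. Two geodesics issuing from $p$ can have Alexandrov angle zero without coinciding on any initial segment -- the angle is the limit of comparison angles as the parameters tend to $0$, so the geodesics could a priori diverge immediately but sublinearly -- and the ``connected one-parameter family of continuations'' assertion is essentially a restatement of the claim being proved. A more robust route, which sidesteps any analysis of the branching locus and is closer in spirit to the monotonicity observation used in the proof of Lemma \ref{lem:finite}, is a direct comparison argument: if $z_1,z_2,z_3$ lie in this order on an arc of $S_\epsilon$, then $[pz_2]$ is trapped in the wedge bounded by $[pz_1]$, $[pz_3]$ and the arc from $z_1$ to $z_3$; intersecting $[pz_2]$ with the geodesic $[\gamma_1(s)\gamma_3(s)]$ for small $s$ and applying the CAT$(\kappa)$ comparison shows that $\angle_p([pz_1],[pz_3])=0$ forces $\angle_p([pz_1],[pz_2])=0$. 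This gives monotonicity of $f$, hence connected fibers, and then either finish goes through.
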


\begin{proof}
The natural projection from $S_\epsilon$ to $S_p(\tilde \Sigma)$ is continuous and, by Lemma \ref{lem:extendible}, surjective. The fiber over any point in $S_p(\tilde \Sigma)$ is easily seen to be a closed interval. Thus $S_p(\tilde \Sigma)$ is homeomorphic to a quotient of $\mathbb S^1$, where each
equivalence class is a closed interval in $\mathbb S^1$. It is a well-known result that such a quotient space
is automatically homeomorphic to $\mathbb S^1$ (this can be easily shown using the topological characterization of $\mathbb S^1$ used in the proof of Proposition \ref{prop:circle}). This establishes the Lemma.
\end{proof}

Combining Proposition \ref{prop:cat(1)} with Lemma \ref{lem:Spcomplete}, which implies that $S_p(\tilde \Sigma)$ is complete, we have

\begin{cor}\label{cor:SpCAT1}
For any $p\in \tilde \Sigma$, $S_p(\tilde \Sigma)$ is $\textrm{CAT}(1)$.
\end{cor}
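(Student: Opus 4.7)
The plan is to note that the statement is essentially an immediate consequence of chaining Proposition \ref{prop:cat(1)} and Lemma \ref{lem:Spcomplete}, once one confirms that $S_p(\tilde \Sigma)$ (with the Alexandrov angle metric) equals its own completion. The whole content of the corollary is this completeness check, since Proposition \ref{prop:cat(1)} already tells us the completion is CAT(1).

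First, I would recall that the natural projection $\pi\colon S_\epsilon \to S_p(\tilde \Sigma)$ which sends a point $q \in S_\epsilon$ to the equivalence class of the geodesic $[pq]$ is continuous with respect to the Alexandrov angle metric on the target; this is standard and follows from the continuous dependence of geodesics on their endpoints inside $B(p, \epsilon_0)$. By Lemma \ref{lem:extendible}, $\pi$ is surjective. Since $S_\epsilon$ is compact (by Lemma \ref{lem:pc}), its continuous image $S_p(\tilde \Sigma)$ is compact in the Alexandrov angle metric.

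A compact metric space is complete, so $S_p(\tilde \Sigma)$ coincides with its own completion. Proposition \ref{prop:cat(1)} then states that this completion is $\textrm{CAT}(1)$, yielding the corollary.

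The only delicate point I foresee is verifying that the topology induced by the Alexandrov angle metric on $S_p(\tilde \Sigma)$ really does coincide with the quotient topology used implicitly in Lemma \ref{lem:Spcomplete}; once that is said, the argument is just the short chain above. I would make this explicit by pointing out that the map $\pi$ is continuous into the Alexandrov angle metric, which is all that is needed to push compactness from $S_\epsilon$ to $S_p(\tilde \Sigma)$, and hence to deduce completeness without having to compare the two topologies directly.
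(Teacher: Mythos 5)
Your proof is correct and matches the paper's reasoning: the paper likewise deduces completeness of $S_p(\tilde\Sigma)$ from Lemma \ref{lem:Spcomplete} and then applies Proposition \ref{prop:cat(1)}. You take a marginally more direct route by pushing compactness (hence completeness) of $S_p(\tilde\Sigma)$ straight from the continuous surjection $S_\epsilon \to S_p(\tilde\Sigma)$, rather than first citing the homeomorphism $S_p(\tilde\Sigma) \cong \mathbb{S}^1$, but the key ingredients (continuity and surjectivity of the radial projection, compactness of $S_\epsilon$) are exactly the ones the paper uses inside the proof of Lemma \ref{lem:Spcomplete}.
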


We now construct the non-expanding map to the model surface $M_\kappa$ of constant curvature $\kappa$. We closely follow the proof of a similar result presented in \cite[Proposition 10.6.10]{bbi}, but for the opposite type of curvature bound (curvature bounded below, rather than above).

\begin{prop}\label{prop:contracting}
Let $\Sigma$ be a $\textrm{CAT}(\kappa)$ surface and $p$ any point in $\Sigma$. Let $\epsilon_0$ be as above. Then there is a map $f: B(p,\epsilon_0) \to M_\kappa$ such that
\begin{itemize}
	\item[(1)] ${d_{M_\kappa}}(f(x),f(y)) \leq d_\Sigma(x,y)$ for all $x,y\in B(p,\epsilon_0)$, 
	\item[(2)] ${d_{M_\kappa}}(f(p),f(y)) = d_\Sigma(p,y)$ for all $y\in B(p,\epsilon_0)$, and 
	\item[(3)] $f(B(p,\epsilon)) = B_{M_\kappa}(f(p),\epsilon)$.
\end{itemize}
\end{prop}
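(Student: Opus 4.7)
The plan is to build $f$ in polar coordinates about a chosen basepoint $\tilde p := f(p) \in M_\kappa$. Identify $S_{\tilde p}(M_\kappa)$ with the standard circle $\mathbb{S}^1$; then $\exp_{\tilde p}$ is a radial isometry from $B_{T_{\tilde p}}(0,\epsilon_0)$ onto $B_{M_\kappa}(\tilde p,\epsilon_0)$, and the $M_\kappa$-distance between $\exp_{\tilde p}(r_1 v_1)$ and $\exp_{\tilde p}(r_2 v_2)$ is a function $F_\kappa(r_1,r_2,\theta)$, where $\theta = d_{\mathbb{S}^1}(v_1,v_2)$ and $F_\kappa$ is the $M_\kappa$ law of cosines, monotonically increasing in $\theta \in [0,\pi]$. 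For $x \in B(p,\epsilon_0) \setminus \{p\}$, I set $r_x := d_\Sigma(p,x)$ and let $v_x \in S_p(\Sigma)$ denote the direction of the unique geodesic $[px]$. Then $f(p) := \tilde p$ and $f(x) := \exp_{\tilde p}(r_x \cdot \phi(v_x))$ for a non-expanding surjection $\phi : S_p(\Sigma) \to \mathbb{S}^1$ to be constructed next.

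Lemma \ref{lem:Spcomplete} and Corollary \ref{cor:SpCAT1} make $S_p(\Sigma)$ into a CAT(1) topological circle whose metric is the Alexandrov angle. I view it as a round circle of intrinsic arc-length $L$, and claim $L \geq 2\pi$. Indeed, applying Lemma \ref{lem:extendible} to a segment $[qp]$ produces a geodesic through $p$, whose two tangent directions at $p$ are at Alexandrov angle exactly $\pi$ (the comparison triangle with sides $t,t,2t$ is degenerate, with the angle at its midpoint equal to $\pi$). Hence $S_p(\Sigma)$ contains two points at distance $\pi$, forcing $L \geq 2\pi$. Let $\phi$ be the arc-length rescaling by $2\pi/L$: this is surjective onto $\mathbb{S}^1$ and non-expanding with respect to the Alexandrov angle metric on $S_p(\Sigma)$, since $L \geq 2\pi$.

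Properties (2) and (3) follow immediately from the polar form of $f$ and surjectivity of $\phi$. The substantive content is property (1). For $x,y \in B(p,\epsilon_0)$, the CAT($\kappa$) angle comparison applied to the triangle $\{p,x,y\}$ gives $\angle_p(x,y) \leq \bar\alpha$, where $\bar\alpha$ is the angle at $\bar p$ in the $M_\kappa$-comparison triangle of matching side lengths, and by definition $d_{S_p(\Sigma)}(v_x,v_y) = \angle_p(x,y)$. Combining with non-expansion of $\phi$ and then monotonicity of $F_\kappa$ yields $d_{M_\kappa}(f(x),f(y)) = F_\kappa(r_x,r_y,d_{\mathbb{S}^1}(\phi(v_x),\phi(v_y))) \leq F_\kappa(r_x,r_y,\bar\alpha) = d_\Sigma(x,y)$. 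The main obstacle is the construction of $\phi$, and in particular the lower bound $L \geq 2\pi$; once this is in place, the rest is the CAT($\kappa$) angle-comparison argument of \cite[Proposition 10.6.10]{bbi} with inequalities reversed, reflecting the upper rather than lower curvature bound.
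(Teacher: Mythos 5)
Your proof is correct and follows essentially the same route as the paper: the paper builds the map as a composition $f = G\circ\log_p$ through the $\kappa$-cone $K^\kappa_p(\tilde\Sigma)$ over $S_p(\tilde\Sigma)$ (citing \cite[Theorem 4.7.1]{bbi} for the CAT$(\kappa)$ comparison), and your polar-coordinate formula with the law-of-cosines argument is precisely an unwinding of that cone construction. One small difference worth noting: where the paper merely asserts that a non-expanding surjection $S_p(\tilde\Sigma)\to\mathbb S^1$ exists because $S_p(\tilde\Sigma)$ is a CAT$(1)$ topological circle, you explicitly establish the length bound $L\geq 2\pi$ via geodesic extendibility (Lemma \ref{lem:extendible}), which is a clean justification of that step.
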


\begin{proof}
By the choice of $\epsilon_0$, we can work in $\Sigma$ or lift $B(p,\epsilon_0)$ homeomorphically to $\tilde \Sigma$. By corollary \ref{cor:SpCAT1}, $S_p(\Sigma)$ is $\textrm{CAT}(1)$. It is homeomorphic to $\mathbb{S}^1$, so it is easy to see that there is a surjective map $g:S_p(\Sigma)\to \mathbb{S}^1$ which is non-expanding:
\[ d_{\mathbb{S}^1}(g(v),g(w)) \leq \measuredangle_p(v,w) \quad \mbox{ for all } \quad v,w\in S_p(\tilde \Sigma). \] 

Let $K^{\kappa}_p(\tilde \Sigma)$ denote the $\kappa$-cone over $S_p(\tilde \Sigma)$ (see, e.g. \cite[\S 10.2.1]{bbi}). By its definition, and the fact that $S_p(\Sigma)$ is $\textrm{CAT}(1)$, $K^{\kappa}_p(\tilde \Sigma)$ is $\textrm{CAT}(\kappa)$ (\cite[Theorem 4.7.1]{bbi}). On the ball $B(p,\epsilon_0)$ define a logarithm map defined as follows:
\[ \log_p: B(p, \epsilon_0) \to K^{\kappa}_p(\tilde \Sigma) \]
\[ p \mapsto \mbox{origin}=o \]
\[ x\neq p \mapsto (v,d_\Sigma(p,x)) \]
where $v$ is the direction in $S_p(\tilde \Sigma)$ of the geodesic segment $[px]$. From the non-expanding property of $g$ and the definition of $K^{\kappa}_p(\tilde \Sigma)$, 
\[ d_{K^{\kappa}_p(\tilde \Sigma)}(\log_p(x),\log_p(y)) \leq d_\Sigma(x,y) \quad \mbox{ for all } \quad x,y\in B(p,\epsilon_0). \]
By its definition, $\log_p$ preserves distance from the origin, and by its definition, $\log_p$ maps $B(p,\epsilon)$ surjectively onto $B_{K^{-1}_p(\tilde \Sigma)}(o, \epsilon).$

Again, using the definition of $K^{\kappa}_p(\tilde \Sigma)$, the non-expanding map $g:S_p(\tilde \Sigma) \to \mathbb{S}^1$ extends to a map $G:K^{\kappa}_p(\tilde \Sigma)\to M_\kappa$, obtained by realizing $M_\kappa$ as the $\kappa$-cone over $\mathbb{S}^1$. The map is non-expanding since $g$ is, and preserves distance from the origin. $G$ sends $B_{K^{\kappa}_p(\tilde \Sigma)}(o,\epsilon)$ surjectively to $B_{M_\kappa}(G(o),\epsilon)$ because $g$ is surjective. Then $f=G\circ \log_p$ is the desired map.
\end{proof}

We are now ready to prove Theorem \ref{main thm}.

\begin{proof}[Proof of Theorem \ref{main thm}]
Let $\delta_0=\epsilon_0$ and let $\delta<\delta_0$. First we bound $H^2(B(p,\delta))$ below.

Let $f$ be the non-expanding map provided by Proposition \ref{prop:contracting}. Since $f$ preserves radial distance from the origin, $f(B(p,\delta)) \subseteq B(f(p),\delta)$. Fix any $\rho>0$ and suppose $\{U_i\}$ is a countable cover of $B(p,\delta)$ with $\diam(U_i)<\rho$. Then the collection $\{ f(U_i)\}$ covers $B(f(p),\delta)$ and $\diam(f(U_i))\leq \diam(U_i) <\rho$ as $f$ is non-expanding. Therefore, 
\[ \Sigma_{i} \diam{U_i}^2 \geq \Sigma_i \diam{f(U_i)}^2 \geq H^2_\rho(B(f(p),\delta)). \]
The right-hand quantity approaches $H^2(B(f(p),\delta))$ as $\rho\to 0$. But this is just the volume of a $\delta$ ball in $M_\kappa$, the model space of curvature $\kappa$. Since this is independent of the choice of basepoint $p$, we obtain the desired uniform lower bound on $H^2(B(p,\delta))$.

\

Now we bound $H^2(B(p,\delta))$ above. This portion of the proof uses the bound on the length of $S_\delta$ obtained in Lemma \ref{lem:uniform}. It is sufficient to bound $H^2(B(p,\delta_0))$ uniformly above.

Fix $\rho<\delta_0$. Let $E_\rho$ be any finite $\frac{\rho}{2}$-spanning subset of $S_{\delta_0}$. The circumference bound allows us to uniformly bound $\#E_\rho$. Index $x_j\in E_\rho$ in order around $S_{\delta_0}$. Let $T_j$ be the geodesic triangle with vertices $p, x_j, x_{j+1}$. $T_j$ has edges of length $\delta_0, \delta_0$ and $<\rho$. Let $\tau_j$ be the corresponding comparison triangle in $M_\kappa$ (see Figure \ref{fig:compare}).

%%%%%%%%%%%%%%%%%%%%

\begin{center}

\setlength{\unitlength}{.5pt}

\begin{picture}(600,400)(-300,-150)

\put(-210, 180){$\Sigma$}
\put(190, 180){$M_\kappa^2$}

\qbezier(-200,150)(-340,140)(-350,0)	
\qbezier(-350,0)(-340,-140)(-200,-150)	
\qbezier(-200,-150)(-40,-140)(-50,0)	
\qbezier(-50,0)(-60,140)(-200,150)	

\put(-370,120){$B(p,\delta_0)$}

\put(-200,0){\circle*{5}}
\put(-215,5){$p$}

\put(-200,0){\line(1,1){108}}
\put(-200,0){\line(2,1){135}}

\put(-92,108){\circle*{5}}
\put(-65,67){\circle*{5}}
\put(-52,20){\circle*{5}}
\put(-50,-20){\circle*{5}}
\put(-58,-70){\circle*{5}}
\put(-80,-105){\circle*{5}}
\put(-115,-131){\circle*{5}}
\put(-170,-147){\circle*{5}}

\put(-230,-146){\circle*{5}}
\put(-285,-125){\circle*{5}}
\put(-320,-94){\circle*{5}}
\put(-342,-48){\circle*{5}}
\put(-350,0){\circle*{5}}
\put(-340,50){\circle*{5}}
\put(-308,108){\circle*{5}}

\put(-260,137){\circle*{5}}
\put(-210,150){\circle*{5}}
\put(-145,140){\circle*{5}}

\put(-88,108){$x_{j+1}$}
\put(-61,67){$x_j$}

\qbezier(-92,108)(-120,60)(-65,67)

\put(-150,80){$T_j$}

\put(70,-20){\circle*{5}}
\put(270,80){\circle*{5}}
\put(300,-20){\circle*{5}}

\multiput(70,-20)(60,0){4}{\circle*{5}}
\multiput(95,-8)(50,25){4}{\circle*{5}}

\qbezier(95,-8)(95,-15)(130,-20)
\qbezier(145,18)(125,1)(130,-20)
\qbezier(145,18)(145,-5)(190,-20)
\qbezier(195,43)(185,10)(190,-20)
\qbezier(195,43)(185,0)(250,-20)
\qbezier(245,68)(225,10)(250,-20)
\qbezier(245,68)(245,0)(300,-20)

\put(190,-40){$\bar y_i$}
\put(190,54){$\bar z_i$}

\put(60,-10){$\bar p$}
\put(275,80){$\bar x_{j+1}$}
\put(305,-20){$\bar x_j$}

\put(90,60){$\tau_j$}

\put(70,-20){\line(2,1){200}}
\put(70,-20){\line(1,0){230}}

\qbezier(270,80)(250,10)(300,-20)

\put(-280,-55){$E_\epsilon$}
\put(-285,-60){\vector(-1,-1){30}}
\put(-290,-50){\vector(-1,0){40}}

\end{picture}

\begin{figure}[h]
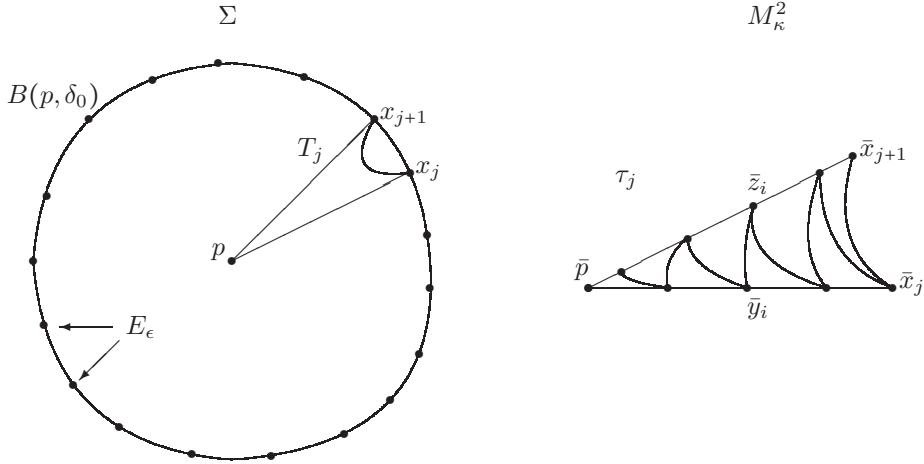

\caption{Comparison triangles for the upper bound.}\label{fig:compare}
\end{figure}

\end{center}

%%%%%%%%%%%%%%%%%%%%

In $M_\kappa$, let $r_j$ be the $\delta_0$-length edge from $\bar p$ to $\bar x_j$. Let $r(\rho)$ be the number of $\rho$-balls centered at points on $r_j$ necessary to cover $\tau_j$ with centers an $\rho$-spanning set in $r_j$. Let their centers be $\bar y_1, \ldots \bar y_{r(\rho)}$. Note that we can take $r(\rho) = C^\prime \frac{\delta_0}{\rho}$ for $C^\prime$ a constant independent of $\rho$.  Now, pick $\bar z_i$ on the other $\delta_0$-length side of $\tau_j$ and in $B(\bar y_i,\rho)\cap B(\bar y_{i+1},\rho)$. Draw in $\tau_j$ and in $T_j$ the zig-zagging segments connecting $y_i$ to $z_i$ to $y_{i+1}$ to $z_{i+1}$ etc. These partition $\tau_j$ and $T_j$ into a union of triangles. In $T_j$ each has all three sides of length $<\rho$, by the choice of $\bar y_i$ and $\bar z_i$. By $\textrm{CAT}(\kappa)$, the corresponding triangles in $T_j$ also have all sides of length $<\rho$, and then, again by $\textrm{CAT}(\kappa)$, we see that the $\rho$-balls centered at $y_i$ in $T_j$ cover $T_j$.

Since $l_g(S_{\delta_0})<C$, we can take $\#E_\rho \leq \frac{2C}{\rho}$. Then $B(p,\delta_0)$ can be covered by $2C C^\prime \frac{\delta_0}{\rho^2}$ balls of radius $\rho$. Thus we obtain a finite cover $\{U_i\}$ of $B(p,\delta_0)$ with $\diam(U_i)\leq 2\rho$ satisfying, 
\[ \sum_i \diam(U_i)^2 \leq 2C C^\prime \frac{\delta_0}{\rho^2}(2\rho)^2 = 8CC^\prime \delta_0 \]
which is bounded above independently of $\rho$. It follows that $H^2(B(p,\delta_0))\leq 8CC^\prime \delta_0$, which verifies that $H^2(B(p,\delta_0))$ is finite and bounded above uniformly in $p$.

\end{proof}

%
%%
%%%
%%%%
%%%%%%%%%%%%%%%%%%%%%%%%%%%%%%%%%%%%%%%%%%%%%%%%%%%%%%%%%%%

\section{Local geometry of $\textrm{CAT}(-1)$ spaces in higher dimensions}\label{sec:example}

In this section we make a few remarks on the obstructions to extending the local geometry results we proved for surfaces to higher dimensions. We note that the proofs in the previous section rely heavily on the 2-dimensionality of $\Sigma$. We do not know if an analogue of Theorem \ref{main thm} holds for $\textrm{CAT}(\kappa)$ metrics on closed higher dimensional manifolds. One of the first steps in our proof was Proposition \ref{prop:circle}, which showed that the small enough metric spheres inside locally $\textrm{CAT}(\kappa)$ surfaces were homeomorphic to the circle $\mathbb S^1$. The analogous statement {\it fails} in dimensions $\geq 5$, as the well-known example below shows.

\begin{prop}
For each dimension $n\geq 5$, there exists a closed $n$-manifold $M$ equipped with a piecewise hyperbolic, locally $\textrm{CAT}(-1)$ metric, and a point $p\in M$ with the property that for all small enough $\epsilon$, the $\epsilon$-sphere $S_\epsilon$ centered at $p$ is {\bf not} homeomorphic to $\mathbb S^{n-1}$. In fact, $S_\epsilon$ is not even a manifold.
\end{prop}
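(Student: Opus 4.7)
The plan is to use the Cannon--Edwards double suspension theorem to exhibit a vertex in a closed topological $n$-manifold whose polyhedral link, in a piecewise hyperbolic $\textrm{CAT}(-1)$ structure, is not a topological manifold.

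Fix a non-simply-connected PL homology sphere $H$ of dimension $n-2$. For $n=5$ take $H$ to be the Poincar\'e homology 3-sphere; for $n\geq 6$ use a Brieskorn sphere, or Kervaire's construction of homology spheres realizing perfect fundamental groups. Equip $H$ with a sufficiently fine piecewise spherical triangulation so that it is $\textrm{CAT}(1)$. The spherical suspension $\Sigma H$ is then $\textrm{CAT}(1)$ by Berestovskii's theorem on joins, and the hyperbolic cone $K^{-1}(\Sigma H)$ is a piecewise hyperbolic $\textrm{CAT}(-1)$ space whose cone vertex $p_0$ has polyhedral link $\Sigma H$. Now $\Sigma H$ is \emph{not} a topological manifold, since at each of its two suspension vertices the polyhedral link is $H$, and $\pi_1(H)\neq 1$ rules out $H\cong \mathbb S^{n-2}$. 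Nevertheless, by the Cannon--Edwards theorem $|\Sigma^2 H|\cong\mathbb S^n$; since $\Sigma^2 H$ decomposes as the union of two cones on $\Sigma H$, a neighborhood of $p_0$ in $K^{-1}(\Sigma H)$ corresponds to an open hemisphere of $\mathbb S^n$ and is thus homeomorphic to $\mathbb R^n$. Hence $K^{-1}(\Sigma H)$ is a topological manifold near $p_0$, yet for every small $\epsilon$ the metric sphere $S_\epsilon(p_0)$ is isometric to a scaled copy of $\Sigma H$, which is not a manifold.

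To upgrade this local model to a closed $n$-manifold, I would start with any closed piecewise hyperbolic locally $\textrm{CAT}(-1)$ $n$-manifold $N$, obtained for instance by applying the Charney--Davis strict hyperbolization to a closed cubical $n$-manifold. Excise a small round metric ball $B\subset N$ around a regular interior point, excise an isometric round metric ball $B'\subset K^{-1}(\Sigma H)$ containing $p_0$ in its interior, and glue $N\setminus B^\circ$ to $\overline{B'}$ by an isometry of the common boundary sphere. The resulting space $M$ is homeomorphic to $N$ as a topological manifold, inherits a piecewise hyperbolic metric, and has a distinguished vertex at the image of $p_0$ whose small metric spheres are isometric to $\Sigma H$, hence not manifolds.

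The main technical obstacle is preserving the locally $\textrm{CAT}(-1)$ condition across the gluing sphere, since Reshetnyak's gluing theorem does not apply directly to a non-convex boundary. One must verify the Gromov link condition at every vertex on the gluing sphere: each such vertex inherits a spherical half-link from each of the two sides, and the two halves glue along their equators to a $\textrm{CAT}(1)$ spherical complex provided the boundary triangulations match. This compatibility can be arranged by choosing both excised balls of the same small metric radius and adjusting the triangulation of $K^{-1}(\Sigma H)$ near its ambient boundary to agree with the triangulation of $\partial B$, delivering the desired closed piecewise hyperbolic locally $\textrm{CAT}(-1)$ $n$-manifold.
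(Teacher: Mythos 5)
Your local model is correct in spirit, and your observation that $K^{-1}(\Sigma H)$ is a topological $n$-manifold near the cone point $p_0$ (since $\Sigma^2 H \cong \mathbb S^n$ by Cannon--Edwards, so the open cone on $\Sigma H$ is $\mathbb S^n$ minus a suspension point $\cong\mathbb R^n$) while its metric spheres about $p_0$ are copies of the non-manifold $\Sigma H$ is the right local picture. However, the globalization step --- excising a ``round metric ball $B'\subset K^{-1}(\Sigma H)$ containing $p_0$ in its interior'' and gluing along a sphere --- has a fundamental topological gap that precedes any concern about preserving $\textrm{CAT}(-1)$. The singular locus of $K^{-1}(\Sigma H)$ is not the single point $p_0$: it is the entire bi-infinite geodesic through $p_0$ in the two suspension directions of $\Sigma H$. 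Indeed, at any point $q=(v,r)$ with $r>0$ and $v$ a suspension vertex of $\Sigma H$, the space of directions at $q$ is $\Sigma\bigl(\mathrm{Lk}(v,\Sigma H)\bigr)=\Sigma H$, not $\mathbb S^{n-1}$. Consequently any metric ball $B'$ with $p_0$ in its interior has boundary $\partial B'$ meeting this singular geodesic in two points, and near those two points $\partial B'$ is locally modeled on $\Sigma H$ --- so $\partial B'$ is itself not a manifold, much less a round hyperbolic sphere. There is therefore nothing to match up with $\partial B\subset N$, and no choice of radius, triangulation, or Reshetnyak/link-condition argument can fix this: the obstruction is that the singularity you want to insert is one-dimensional, not zero-dimensional, and it cannot be truncated inside a ball with manifold boundary. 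This is precisely why the paper does not attempt a local surgery. Instead it triangulates all of $\Sigma^2 H\cong\mathbb S^n$, notices that the non-PL locus is a $4$-cycle in the $1$-skeleton with link $H$, and applies Charney--Davis strict hyperbolization to the whole closed triangulated manifold at once. The $4$-cycle becomes a closed geodesic $\gamma$ in the output, so the singular set closes up globally, and small metric spheres about points of $\gamma$ are $\Sigma H$ as desired.

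A secondary gap: the assertion that a ``sufficiently fine piecewise spherical triangulation'' of $H$ makes it $\textrm{CAT}(1)$ is unjustified. Fineness alone does not yield $\textrm{CAT}(1)$; one needs a link condition (e.g.\ Gromov's flag/all-right condition, or Moussong's criterion), and a homology sphere with nontrivial $\pi_1$ certainly does not admit a constant-curvature-$1$ metric to approximate. The paper sidesteps this entirely: strict hyperbolization takes as input only the simplicial complex $(\mathbb S^n,\mathcal T)$ and produces a piecewise hyperbolic $\textrm{CAT}(-1)$ output automatically, with no need to pre-equip $H$ (or $\Sigma H$) with a $\textrm{CAT}(1)$ metric.
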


\begin{proof}

Such examples can be found in the work of Davis and Januszkiewicz \cite[Theorem 5b.1]{mike-tadek}. We briefly summarize the construction for the convenience of the reader. Start with a closed smooth homology sphere $N^{n-2}$ which is not homeomorphic to $\mathbb S^{n-2}$. Such manifolds exist for all $n\geq 5$, and are quotients of $\mathbb S^{n-2}$ by a suitable perfect group $\pi_1(N^{n-2})$. Take a smooth triangulation of $N^{n-2}$, and consider the induced triangulation $\mathcal T$ on the double suspension $\Sigma ^2 (N^{n-2})$. By work of Cannon and Edwards, $\Sigma ^2 (N^{n-2})$ is homeomorphic to $\mathbb S^n$. The triangulation $\mathcal T$ on $\mathbb S^n$ is not a PL-triangulation, as there exists a $4$-cycle in the $1$-skeleton of the triangulation whose link is homeomorphic to $N^{n-2}$. Now apply the strict hyperbolization procedure of Charney and Davis \cite{charney-davis} to the triangulated manifold $(\mathbb S^n, \mathcal T)$.

This outputs a piecewise-hyperbolic, locally CAT(-1) space $M$. A key point of the hyperbolization procedure is that it preserves the local structure. Since the input $(\mathbb S^n, \mathcal T)$ is a closed $n$-manifold, the output $M$ is also a closed $n$-manifold. The $4$-cycle in $\mathcal T$ whose link was homeomorphic to $N^{n-2}$ now produces a closed geodesic $\gamma$ in $M$, whose link is still homeomorphic to $N^{n-2}$ (i.e. the ``unit normal'' to $\gamma$ forms a copy of $N^{n-2}$). It follows from this that, picking the point $p$ on $\gamma$, all small $\epsilon$-spheres $S_\epsilon$ are homeomorphic to the suspension $\Sigma N^{n-2}$. Since $N^{n-2}$ was {\bf not} the standard sphere, $S_\epsilon \cong \Sigma N^{n-2}$ fails to be a manifold at the suspension point $x$, as every small punctured neighborhood of $x$ will have non-trivial $\pi_1$. We refer the reader to \cite[Section 5]{mike-tadek} for more details.

\end{proof}

This cautionary example suggests that small metric spheres in high-dimensional locally CAT($\kappa$) manifolds could exhibit pathologies. In view of these results, and the interest in obtaining higher dimensional analogs, we raise the following question.

\begin{ques}
Let $M$ be a closed $n$-manifold equipped with a locally $\textrm{CAT}(-1)$ metric of Hausdorff dimension $d$. Can $d$ ever be strictly larger than $n$? Do the uniform bound conditions of Theorem \ref{main thm} hold in higher dimensions? 
\end{ques}

The authors suspect that examples with $d>n$ do indeed exist in higher dimensions.

%
%%
%%%
%%%%
%%%%%%%%%%%%%%%%%%%%%%%%%%%%%%%%%%%%%%%%%%%%%%%%%%%%%%%%%%%

\section{Entropy rigidity in CAT(-1)}\label{sec:entropy}

In this section we present an entropy rigidity result for closed $\textrm{CAT}(-1)$ manifolds. This result generalizes Hamenst\"adt's entropy rigidity result from \cite{hamenstadt_entropy} to the $CAT(-1)$ setting. It is very closely related to, and in fact relies on, a rigidity result of Bourdon. The main addition to Bourdon's theorem is the connection to topological entropy via a theorem of Leuzinger (generalizing work of Manning). The theorem draws heavily on the work of others, but we have not seen it presented in this form in the literature.

We remark that our work on Hausdorff 2-measure for surfaces, presented in the earlier sections of this paper, was inspired in part by condition ($\star$) that features in Leuzinger's theorem below (Theorem \ref{leuzinger}). 

Recall that a negatively curved locally symmetric space has universal cover isometric to $\mathbb{H}^m_\mathbf{K}$, where $m\geq 2$ and $\mathbf{K}$ is $\mathbb{R}, \mathbb{C}$, the quaternions $\mathbf{H}$, or the octonions $\mathbf{O}$ (with $m=2$). We suppose that the metrics on these spaces are scaled so that the maximum sectional curvature is $-1$. The topological entropy for the geodesic flow on a compact locally symmetric space modeled on $\mathbb{H}^m_\mathbf{K}$ is $km+k-2$ where $k=\dim_\mathbb{R}\mathbf{K}$. (For the definition of topological entropy, see \cite[\S 3.1]{kh}.) 

More generally, if one has a locally $\textrm{CAT}(-1)$ metric $d$ on the manifold $M$, 
there is an associated 
space $\mathcal G(M,d)$ of geodesics in $M$: this is the space of locally isometric maps $\mathbb R \rightarrow M$. Via lifting to the universal cover, this space is topologically a quotient of
$\mathbb S^{n-1} \times \mathbb S^{n-1} \times \mathbb R$ by a suitable $\Gamma:= \pi_1(M)$ action. There is a 
natural flow $\phi^d_t$ on $\mathcal G(M,d)$, given by precomposition with an $\mathbb R$-translation. This is called the
geodesic flow associated to the metric $d$. One can again measure the topological entropy of this flow. 

The main result of this section is the following restatement of our Theorem \ref{entropy rigidity}:

\begin{thm}\label{thm:rigidity}
Let $(X,d)$ be a closed $n$-dimensional manifold equipped with a locally $\textrm{CAT}(-1)$ metric $d$ (not necessarily Riemannian). Suppose that $X$ also supports a locally symmetric Riemannian metric $g$, under which $(X,g)$ is locally modeled on $\mathbb{H}^{m}_\mathbf{K}$, normalized to have maximum sectional curvature $-1$. Then 
\[ h_{top}(\phi^d_t)\geq h_{top}(\phi^g_t) = km+k-2\]
and if equality holds in the above, $(X,d)$ is also locally symmetric. If $n>2$, $(X,d)$ and $(X,g)$ are isometric.
\end{thm}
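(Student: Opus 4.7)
The plan is to deduce the theorem from three ingredients: (i) Leuzinger's Manning-type theorem (Theorem \ref{leuzinger}) which, under the uniformity condition ($\star$) on Hausdorff measures of balls, identifies the topological entropy of the geodesic flow with the exponential volume growth rate of the universal cover; (ii) the Milnor--\v{S}varc identification of that volume growth rate with the critical exponent of the $\Gamma:=\pi_1(X)$ action on $\tilde X$; and (iii) the critical-exponent rigidity theorem of Bourdon \cite{bourdon_cr}, comparing the critical exponents of $\Gamma$ acting on an arbitrary CAT(-1) space and on the rank-one symmetric space $\mathbb{H}^m_\mathbf{K}$.

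First, I would apply Leuzinger's theorem to both metrics. For $g$ the hypotheses are classical, and an explicit computation in the rank-one symmetric setting gives $h_{top}(\phi^g_t)=km+k-2$. For $d$ one needs condition ($\star$); when $n=2$ this is exactly the content of Theorem \ref{main thm}, while in higher dimensions it must be assumed or verified separately. A standard Milnor--\v{S}varc argument then identifies these volume growth rates with the critical exponents $\delta^d_\Gamma$ and $\delta^g_\Gamma=km+k-2$, using the cocompactness of the $\Gamma$-actions on both universal covers.

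Next, I would invoke Bourdon's rigidity theorem \cite{bourdon_cr}: for any isometric cocompact action of a cocompact lattice $\Gamma\leq \Isom(\mathbb{H}^m_\mathbf{K})$ on a CAT(-1) space $\tilde X$, one has $\delta^d_\Gamma\geq\delta^g_\Gamma$, with equality if and only if there is a $\Gamma$-equivariant isometric embedding $\iota:\mathbb{H}^m_\mathbf{K}\hookrightarrow \tilde X$. This at once gives the inequality $h_{top}(\phi^d_t)\geq h_{top}(\phi^g_t)$. In the equality case, the image $\iota(\mathbb{H}^m_\mathbf{K})$ is a $\Gamma$-invariant, convex, totally geodesic subspace of $\tilde X$ locally isometric to the model space, exhibiting $(X,d)$ as locally symmetric.

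For the upgrade to global isometry when $n>2$: since $n=\dim X=km$, the map $\iota$ is a continuous injection between topological $n$-manifolds of the same dimension, so invariance of domain makes its image open; the image is also closed since $\iota$ embeds the complete space $\mathbb{H}^m_\mathbf{K}$ isometrically, and hence by connectedness of $\tilde X$ it is all of $\tilde X$. Thus $\iota$ is a $\Gamma$-equivariant surjective isometry descending to the desired isometry $(X,d)\cong(X,g)$. The main obstacle in executing this plan is verifying condition ($\star$) of Leuzinger's theorem in dimensions $n>2$: the arguments developed in Sections \ref{sec:circle}--\ref{sec:proof} depend essentially on two-dimensionality (as the cautionary example in Section \ref{sec:example} makes clear), so for higher-dimensional CAT(-1) manifolds one must either impose ($\star$) as a hypothesis or establish it by separate means before applying the chain above.
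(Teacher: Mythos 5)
The essential chain you describe---Leuzinger's Manning-type theorem to relate topological entropy to volume growth, then identification with the critical exponent (equivalently, Hausdorff dimension of the boundary), then Pansu's lower bound and Bourdon's rigidity---is exactly the paper's architecture, and the invariance-of-domain argument for promoting the embedding to an isometry when $n>2$ is also what the paper does. So structurally you have the right proof.

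However, there is a genuine gap in how you handle condition ($\star$). You interpret ($\star$) as a uniformity condition \emph{on Hausdorff measures of $d$-balls}, and hence conclude that the proof only works outright for $n=2$ (via Theorem~\ref{main thm}) and that for $n>2$ the condition ``must be assumed or verified separately.'' This is not what the paper does, nor is it necessary. Leuzinger's theorem permits any Borel measure $vol$, and the paper simply takes $vol$ to be the Riemannian volume form of $g$ (or any auxiliary Riemannian metric on $X$). For \emph{that} measure, condition ($\star$) is verified by a soft compactness argument in every dimension: the upper bound is immediate since $d$-balls lift isometrically and have $vol$ bounded by $vol(X)$; for the lower bound, if $vol(B^d(z_n,\delta)) \to 0$, pass to a limit $z^*$, note $B^d(z^*,\delta/2) \subset B^d(z_{n_i},\delta)$ eventually so $vol(B^d(z^*,\delta/2)) = 0$, and then observe that $d$ and $g$ induce the same topology, so $B^d(z^*,\delta/2)$ contains some $B^g(z^*,\epsilon)$ of positive Riemannian volume---a contradiction. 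The resulting volume growth entropy is independent of the choice of measure (this is Proposition~\ref{prop:dim=vol}, which identifies it with $\dim_H(\partial^\infty \tilde X, d_x^*)$ for any finite nonzero measure on $X$), so using the Riemannian $vol$ rather than a Hausdorff measure costs nothing. The upshot is that Theorem~\ref{thm:rigidity} is proved in all dimensions with no extra hypothesis; the surface result (Theorem~\ref{main thm}) motivates the interest in ($\star$) but is not used in its proof. You should replace the claim that higher dimensions need an added hypothesis with the verification of ($\star$) for an auxiliary Riemannian volume form.

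One smaller point: in the equality case you should be explicit that $\iota$ is $\Gamma$-equivariant only when $n>2$ (this is what Bourdon's theorem delivers), which is why the ``locally symmetric'' conclusion is unconditional while the global isometry conclusion is restricted to $n>2$; your write-up slightly conflates the two regimes by stating equivariance before splitting on $n$.
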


This should be compared with the main theorem of \cite{hamenstadt_entropy}, which establishes
this same result when the metric $d$ is a Riemannian metric with sectional curvature $\leq -1$.
The key element in this proof is the following theorem of Bourdon, which he notes is a generalization of Hamenst\"adt's work.

\begin{thm}\cite[Th\'eor\`eme 0.3 and following remarks]{bourdon}\label{bourdon}
Let $\tilde X$ be a $\textrm{CAT}(-1)$ space with a cocompact isometric action by $\Gamma$ which also acts convex cocompactly by isometries on a negatively curved symmetric space $S=\mathbb{H}^{m}_\mathbf{K}$. Let $\Lambda$ be the limit set in $\partial^\infty \tilde X$ of $\Gamma$, and let $d^*$ and $g^*$ denote the visual metrics on $\partial^\infty \tilde X$ and $\partial^\infty S$, respectively. Suppose that $\dim_H(\Lambda, d^*) = \dim_H(\partial^\infty S, g^*)$. Then there exists an isometric embedding $F$ from $S$ into $\tilde X$ such that $\partial^\infty F(\partial^\infty S) = \Lambda$. If $\dim_\mathbb{R}S>2$, then this embedding is $\Gamma$-equivariant.
\end{thm}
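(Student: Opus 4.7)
The plan is to construct a $\Gamma$-equivariant boundary map $f : \partial^\infty S \to \Lambda$ and then promote its topological regularity to a conformal/M\"obius property via Patterson--Sullivan theory, following the framework developed by Sullivan, Mostow, Pansu, and Hamenst\"adt. First, cocompactness of $\Gamma$ on $\tilde X$ together with the \v{S}varc--Milnor lemma makes $\Gamma$ word hyperbolic with $\partial^\infty \Gamma \cong \partial^\infty \tilde X = \Lambda$, while convex cocompactness of the action on $S$ yields a $\Gamma$-equivariant homeomorphism $\partial^\infty \Gamma \to \Lambda_S$ onto the limit set $\Lambda_S \subset \partial^\infty S$. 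Composing gives a $\Gamma$-equivariant homeomorphism $f : \Lambda_S \to \Lambda$, the candidate boundary of the eventual isometric embedding.

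Next, invoke Patterson--Sullivan theory to translate the Hausdorff-dimension hypothesis into measure-theoretic information. Let $\delta_{\tilde X}$ and $\delta_S$ denote the critical exponents of the two $\Gamma$-actions. By Coornaert's theorem for $\textrm{CAT}(-1)$ groups (and Sullivan's classical result for rank-one symmetric spaces), each critical exponent equals the Hausdorff dimension of the corresponding limit set in the visual metric: $\delta_{\tilde X} = \dim_H(\Lambda, d^*)$ and $\delta_S = \dim_H(\Lambda_S, g^*)$. Since $\delta_S \leq \dim_H(\partial^\infty S, g^*)$ with equality iff $\Lambda_S = \partial^\infty S$, the hypothesis $\dim_H(\Lambda, d^*) = \dim_H(\partial^\infty S, g^*)$ forces both $\Lambda_S = \partial^\infty S$ and $\delta_{\tilde X} = \delta_S$ (so $\Gamma$ is in fact cocompact on $S$). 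Build Patterson--Sullivan conformal densities $\{\mu_x\}_{x\in\tilde X}$ on $\Lambda$ and $\{\nu_y\}_{y\in S}$ on $\partial^\infty S$, both of this common critical dimension.

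The crucial step is to upgrade $f$ from a homeomorphism to a map preserving the visual cross-ratio. By uniqueness of conformal densities at the critical exponent (Sullivan, Yue, Roblin), $f^*\mu_x$ and $\nu_y$ lie in the same conformal class, with Radon--Nikodym derivative given by Busemann/Gromov-product cocycles on both sides. In Bourdon's cross-ratio formalism for boundaries of $\textrm{CAT}(-1)$ spaces, this identification of conformal factors is equivalent to the assertion that $f$ is M\"obius, i.e.\ preserves the four-point visual cross-ratio. Given such an $f$, extend it canonically to $F : S \to \tilde X$ by mapping each geodesic $\gamma_S(\xi_-,\xi_+) \subset S$ to the unique $\textrm{CAT}(-1)$ geodesic $\gamma_{\tilde X}(f(\xi_-), f(\xi_+)) \subset \tilde X$, parametrized so as to match a common reference horofunction; cross-ratio preservation then forces $F$ to preserve distances along and between geodesics, yielding an isometric embedding with $\partial^\infty F = f$. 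When $\dim_\mathbb{R} S > 2$, Pansu's rigidity for M\"obius maps of Carnot--Carath\'eodory boundaries promotes the $\Gamma$-equivariance of $f$ to $\Gamma$-equivariance of $F$.

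The main obstacle is the conformality step: converting the bare equality of Hausdorff dimensions into the pointwise M\"obius property of $f$. Hausdorff-dimension equality is a coarse measure-theoretic statement, and extracting from it the sharp conformal structure needed for Bourdon's cross-ratio rigidity requires combining uniqueness of Patterson--Sullivan densities with mixing/equidistribution of the geodesic flow on $\Gamma \backslash \tilde X$ and fine regularity of the Radon--Nikodym derivative $df^*\mu_x/d\nu_y$. The dimension restriction for equivariance is sharp: in the $\mathbb{H}^2_\mathbb{R}$ case the boundary admits an enormous conformal group of self-homeomorphisms, and $\Gamma$-equivariance of $F$ cannot in general be forced from the boundary data alone.
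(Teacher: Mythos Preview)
This theorem is not proved in the paper: it is quoted as Bourdon's result \cite[Th\'eor\`eme 0.3]{bourdon} and used as a black box in the proof of Theorem~\ref{thm:rigidity}. There is therefore no proof in the paper to compare your attempt against.

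That said, your sketch is in the spirit of Bourdon's own argument (Patterson--Sullivan densities, visual metrics, cross-ratio rigidity), so a few remarks on where it is loose. The logical chain in your second paragraph does not close: from the hypothesis $\dim_H(\Lambda,d^*)=\dim_H(\partial^\infty S,g^*)$ you obtain $\delta_{\tilde X}=\dim_H(\partial^\infty S,g^*)$, but you have not related $\delta_{\tilde X}$ to $\delta_S$, so the conclusion ``$\delta_{\tilde X}=\delta_S$ and $\Lambda_S=\partial^\infty S$'' is asserted rather than derived. In Bourdon's treatment this step goes through Pansu's inequality (Theorem~\ref{pansu} here) and an analysis of its equality case, not through a bare comparison of critical exponents. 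Also, equality of Hausdorff dimensions of $\Lambda_S$ and $\partial^\infty S$ does not by itself force $\Lambda_S=\partial^\infty S$; one needs the additional input that for convex cocompact but non-cocompact actions on rank-one symmetric spaces the critical exponent is \emph{strictly} below the boundary dimension.

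Finally, your account of the $\dim_{\mathbb R}S>2$ clause is slightly off: the boundary map $f$ is $\Gamma$-equivariant by construction in all dimensions, and the issue in dimension $2$ is not promoting equivariance of $f$ to equivariance of $F$, but rather that the extension $F$ itself need not be unique (the circle admits many M\"obius self-maps), so one cannot conclude $(X,d)$ and $(X,g)$ are isometric---only that $(\tilde X,\tilde d)$ contains an isometric copy of $\mathbb H^2$.
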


This theorem proves rigidity for the extremal value of $\dim_H(\Lambda, d^*)$ -- an extremal value which had already been computed by Pansu:

\begin{thm}\cite[Th\'eor\`eme 5.5]{pansu_confdim}\label{pansu}
With notation as in Theorem \ref{bourdon},
\[ \dim_H(\Lambda,d^*) \geq \dim_H(\partial^\infty S,g^*) = km+k-2.\]
\end{thm}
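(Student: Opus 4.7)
The plan has two parts: first compute $\dim_H(\partial^\infty S, g^*)$ exactly using the Carnot structure on the boundary of $\mathbb{H}^m_\mathbf{K}$, then transfer the resulting lower bound to $\Lambda$ through a boundary quasisymmetry coming from the $\Gamma$-action, invoking Pansu's conformal dimension theorem for rank-one boundaries.

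For the equality, fix a basepoint $o \in S = \mathbb{H}^m_\mathbf{K}$ and a point at infinity $\xi$. The stabilizer of $\xi$ in $\mathrm{Isom}(S)$ contains a nilpotent Iwasawa subgroup $N$ acting simply transitively on $\partial^\infty S \setminus \{\xi\}$; the root space decomposition of $\mathbb{H}^m_\mathbf{K}$ endows $N$ with a two-step grading $V_1 \oplus V_2$ of dimensions $k(m-1)$ and $k-1$, and hence with a left-invariant Carnot--Carath\'eodory metric. Under the identification $\partial^\infty S \setminus \{\xi\} \cong N$, the visual metric $g^*$ is locally bi-Lipschitz to this CC metric, so the standard homogeneous-dimension formula for Carnot groups yields $\dim_H(\partial^\infty S, g^*) = \dim V_1 + 2\dim V_2 = km + k - 2$. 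For $k=1$ this degenerates to the round sphere $\mathbb{S}^{m-1}$ with Hausdorff dimension $m-1$.

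For the inequality, the cocompact action of $\Gamma$ on $\tilde X$ together with its (convex) cocompact action on $S$ makes the two orbit maps $\Gamma$-equivariant quasi-isometric embeddings of $\Gamma$ into Gromov-hyperbolic spaces; composing produces a $\Gamma$-equivariant quasi-isometry between the convex hull of $\Lambda_S \subseteq \partial^\infty S$ in $S$ and a $\Gamma$-invariant subset of $\tilde X$. The standard boundary-extension theorem for quasi-isometries of Gromov-hyperbolic spaces (see \cite[Chapter III.H]{bh}) supplies a $\Gamma$-equivariant homeomorphism $\phi : \Lambda_S \to \Lambda$ which is a quasisymmetry for the visual metrics $g^*$ and $d^*$, since a QI preserves Gromov products up to bounded additive error and visual metrics are exponentials of Gromov products. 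In the setting of the theorem one has $\Lambda_S = \partial^\infty S$, so pulling $d^*$ back along $\phi$ yields a metric $\phi^* d^*$ on $\partial^\infty S$ that is quasisymmetrically equivalent to $g^*$ and satisfies $\dim_H(\partial^\infty S, \phi^*d^*) = \dim_H(\Lambda, d^*)$.

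The proof is completed by appealing to Pansu's conformal dimension theorem: the Hausdorff dimension of $(\partial^\infty S, g^*)$ is not decreased by passage to any quasisymmetrically equivalent metric, so $\dim_H(\Lambda, d^*) = \dim_H(\partial^\infty S, \phi^*d^*) \geq km+k-2$, as required. The hard part, by a wide margin, is this conformal dimension statement. For $k=1$ it follows from classical conformal modulus estimates of Gehring on $\mathbb{S}^{m-1}$. For $k \geq 2$ one uses Pansu's differentiation theorem---every quasisymmetric self-map of the Carnot group $N$ is almost-everywhere Pansu-differentiable with differential an automorphism preserving the grading, hence a homothety on each graded piece whose metric Jacobian is completely determined by its scaling factor---together with a $Q$-Loewner / curve-family modulus estimate in the CC metric, for $Q = km+k-2$, forcing the Hausdorff $Q$-measure class to be preserved under any such quasisymmetry. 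Verifying the required modulus estimates in each of the four Carnot structures coming from the rank-one symmetric spaces is the delicate heart of \cite{pansu_confdim}; the boundary-extension step and the direct dimension computation are both formal.
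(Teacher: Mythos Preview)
The paper provides no proof of this statement: it is quoted directly as Th\'eor\`eme~5.5 of \cite{pansu_confdim} and used as a black box in the proof of Theorem~\ref{thm:rigidity}. So there is nothing to compare against; your write-up goes well beyond what the paper does.

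As a sketch of Pansu's argument your outline is accurate in its broad strokes: the equality $\dim_H(\partial^\infty S, g^*)=km+k-2$ does come from identifying the visual metric with a Carnot--Carath\'eodory metric on the Iwasawa $N$ and reading off the homogeneous dimension $k(m-1)+2(k-1)$; and the inequality is indeed the conformal dimension statement, transferred through a boundary quasisymmetry, with Pansu differentiation and the Loewner-type modulus estimates doing the heavy lifting. One point to be careful with: you assert without argument that ``in the setting of the theorem one has $\Lambda_S = \partial^\infty S$''. Under the literal hypotheses of Theorem~\ref{bourdon} (convex cocompact action on $S$), this need not hold---a Schottky group in $\mathbb{H}^3$ is convex cocompact with Cantor limit set. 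What makes it true here is that in the paper's only application (Theorem~\ref{thm:rigidity}) the group $\Gamma=\pi_1(X)$ is a uniform lattice in $\Isom(S)$, so its limit set is the whole sphere. You should flag that this is the regime in which the quasisymmetry $\partial^\infty S \to \Lambda$ actually exists; otherwise the conformal-dimension bound only controls $\dim_H(\Lambda_S,g^*)$, not $\dim_H(\Lambda,d^*)$.
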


A definition for the visual metrics referenced above can be found in \cite[\S III.H.3]{bh}. Here we give a short definition of a distance function which is Lipschitz equivalent to any visual metric. Since Hausdorff dimension can be calculated using any distance function in the Lipschitz equivalence class, this suffices for our purposes.

\begin{defn}
Let $(\tilde X, \tilde d)$ be a $\textrm{CAT}(-1)$ metric space and let $\zeta, \eta \in \partial^\infty \tilde X$. Fix some basepoint $x\in\tilde X$ and define
\[ d^*_x (\zeta,\eta) = e^{-\tilde d(x,[\zeta\eta])},\]
where $[\zeta\eta]$ denotes the bi-infinite geodesic in $\tilde X$ with endpoints at $\zeta$ and $\eta$.
It is straightforward to see that the Lipschitz class of $d_x^*$ is independent of $x$.
\end{defn}

We also need the following result of Manning, as generalized to the $\textrm{CAT}(-1)$ setting by Leuzinger.

\begin{defn}
Let $(X,d)$ be a closed manifold with some metric $d$ and endowed with a measure $vol$. Let $(\tilde X, \tilde d)$ denote its universal cover. Then the \emph{volume growth entropy} of $(\tilde X,\tilde d)$ with respect to $vol$ is
\[ h_{vol}(\tilde X,\tilde d):= \limsup_{R\to\infty}\frac{1}{R} vol(B(p,R))\]
where $B(p,R)$ is the ball of radius $R$ about a point $p$ in $\tilde X$.
\end{defn}

Manning shows that this is independent of the choice of $p$ and that, for Riemannian manifolds, the $\limsup$ is in fact a limit.

\begin{thm}[Leuzinger, \cite{leuzinger}; compare with Manning \cite{manning}]\label{leuzinger}
Let $(\tilde X,\tilde d)$ be a geodesically complete, locally geodesic metric space endowed with a measure $vol$, and having compact quotient $\tilde X/\Gamma$. Assume that
\begin{itemize}
	\item[($\star$)] there exists some $0<\delta_0<\infty$ such that for all $0<\delta\leq \delta_0$,
	\[ 0<\inf_{z\in X}vol(B(z,\delta)) \mbox{ and } \sup_{z\in X}vol(B(z,\delta))<\infty. \]
\end{itemize}
Then
\[h_{top}(\phi_t)\geq h_{vol}(\tilde X,\tilde d).\]
If, in addition, $(\tilde X, \tilde d)$ is $\textrm{CAT}(0)$, then
\[ h_{top}(\phi_t) = h_{vol}(\tilde X,\tilde d). \]
\end{thm}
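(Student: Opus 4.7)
The plan is to adapt Manning's classical argument \cite{manning} to the metric setting, with condition $(\star)$ playing the role that Bishop--Gromov volume comparison plays in the Riemannian case. The proof naturally splits in two: the universal inequality $h_{top}(\phi_t) \geq h_{vol}(\tilde X, \tilde d)$, valid under cocompactness, geodesic completeness, and $(\star)$; and the reverse inequality under the additional $\textrm{CAT}(0)$ hypothesis, where convexity of distance along pairs of geodesics supplies the needed packing control.

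For the lower bound, fix $p \in \tilde X$ and define the orbit counting function $N_\Gamma(R) := \#(\Gamma \cdot p \cap B(p, R))$. Cocompactness gives a positive lower bound on $\inf_{\gamma \neq 1} \tilde d(p, \gamma p)$, and combining this with the two-sided bounds in $(\star)$ through a standard packing/covering argument yields
\[ \limsup_{R \to \infty} \frac{1}{R} \log N_\Gamma(R) = h_{vol}(\tilde X, \tilde d). \]
For each $\gamma$ with $R - 1 \leq \tilde d(p, \gamma p) \leq R$, use Lemma \ref{lem:extendible} to extend the geodesic from $p$ to $\gamma p$ to a unit-speed geodesic $c_\gamma : [0, R] \to \tilde X$. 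Next, thin the orbit points to a $3\epsilon$-separated subset in $\tilde X$; by $(\star)$ this costs only a multiplicative constant depending on $\epsilon$. The key step is to show that the projections of the retained $c_\gamma$ form an $(R, \epsilon)$-separated set for $\phi_t$ on $\mathcal G(\tilde X/\Gamma, d)$. If not, two trajectories $\bar c_{\gamma_1}, \bar c_{\gamma_2}$ would remain within $\epsilon$ on the quotient for all $t \in [0, R]$; for $\epsilon$ smaller than a Lebesgue number of the covering $\tilde X \to \tilde X/\Gamma$, this forces the lifts $c_{\gamma_1}, c_{\gamma_2}$ themselves to stay within $\epsilon$ in $\tilde X$ (the deck element realizing the closeness is forced to be the identity by continuity plus discreteness of $\Gamma$), contradicting $3\epsilon$-separation of their endpoints. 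Letting $R \to \infty$ yields $h_{top}(\phi_t) \geq h_{vol}$.

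For equality in the $\textrm{CAT}(0)$ case, convexity of the distance function supplies the reverse estimate. Given an $(R, \epsilon)$-separated set of geodesic-flow orbits, lift chosen representative segments to start at a common point $p \in \tilde X$. For any two such lifts $c, c'$ the function $t \mapsto \tilde d(c(t), c'(t))$ is convex, vanishes at $t = 0$, and is non-negative, hence non-decreasing. Thus separation of the projected orbits at any $t \leq R$ is bounded above by $\tilde d(c(R), c'(R))$, and $(R, \epsilon)$-separation forces $\inf_{\gamma \in \Gamma} \tilde d(c(R), \gamma c'(R)) \geq \epsilon$. After quotienting by this $\Gamma$-ambiguity, the endpoints sit in pairwise disjoint $\epsilon/2$-balls inside $B(p, R + D)$, where $D$ depends on the diameter of a fundamental domain. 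The lower bound in $(\star)$ then yields
\[ N(R, \epsilon) \leq \frac{vol(B(p, R + D))}{c(\epsilon)}, \]
and taking logs and $R \to \infty$ gives $h_{top}(\phi_t) \leq h_{vol}$.

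The main technical obstacle is the delicate interplay between separation in the space of geodesics and separation in $\tilde X$. Without a Riemannian exponential map, geodesics from $p$ need not be unique and the initial-condition correspondence is many-to-one, so care is required in the lower-bound argument to ensure that one does not conflate distinct flow orbits nor spuriously merge them under the $\Gamma$-action. Condition $(\star)$ is precisely what is needed to substitute for volume comparison in both directions: the upper bound in $(\star)$ supplies the lattice-point estimates for the lower bound on $h_{top}$, while the lower bound in $(\star)$ supplies the packing bound for the upper bound on $h_{top}$.
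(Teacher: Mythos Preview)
The paper does not give its own proof of this theorem: it is quoted as a result of Leuzinger (extending Manning), and the only commentary is the sentence immediately following the statement, noting that the two ingredients extracted from Manning's argument are condition $(\star)$ and convexity of $t\mapsto d(c_1(t),c_2(t))$. Your proposal is precisely an elaboration of that sentence, so at the level of strategy there is nothing to compare---you are reconstructing the cited proof rather than offering an alternative.

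That said, your sketch of the $\textrm{CAT}(0)$ upper bound has a genuine gap. You write ``lift chosen representative segments to start at a common point $p\in\tilde X$'' and then use that $t\mapsto \tilde d(c(t),c'(t))$ vanishes at $t=0$ to conclude monotonicity. But an arbitrary $(R,\epsilon)$-separated family in $\mathcal G(X)$ consists of geodesics whose time-$0$ footpoints range over all of $X$; their lifts can be arranged to start in a fixed fundamental domain, but not at a single point. With $\tilde d(c(0),c'(0))$ only bounded by the diameter $D$ of that domain, convexity gives $\tilde d(c(t),c'(t))\leq \max\big(D,\tilde d(c(R),c'(R))\big)$, which is too weak to force endpoint separation from $(R,\epsilon)$-separation. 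Manning's actual argument runs the other way: one builds an $(R,\epsilon)$-\emph{spanning} set by taking geodesics between an $\epsilon/2$-net in the fundamental domain and an $\epsilon/2$-net in $B(p,R+D)$, and convexity shows every lifted segment is $\epsilon$-shadowed by one of these; the lower bound in $(\star)$ then controls the size of the second net. Your ingredients are right but the separated-set formulation does not close as written.

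A smaller point: in the lower bound, citing Lemma~\ref{lem:extendible} is misplaced (that lemma is specific to surfaces), though the needed extension is already part of the hypothesis ``geodesically complete''; and the arithmetic with the $\leq 1$ extension versus $3\epsilon$-separation of orbit points needs tidying, since the triangle inequality gives $\tilde d(\gamma_1 p,\gamma_2 p)\leq 2+\epsilon$, not $<3\epsilon$.
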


Manning proved this result for Riemannian manifolds (for which condition ($\star$) holds automatically); Leuzinger extracts the key conditions from that proof, namely ($\star$) and convexity of the function $d(c_1(t),c_2(t))$, the distance between a pair of geodesics. The latter is automatically satisfied in a $\textrm{CAT}(0)$ space. 

In the surface case, the validity of condition ($\star$) for $2$-dimensional Hausdorff measure is established in our Theorem \ref{main thm}, so applying Theorem \ref{leuzinger} immediately yields Theorem \ref{thm:surface vol entropy}.

Finally, we note the following.

\begin{prop}\label{prop:dim=vol}
Let $(X,d)$ be compact and locally $\textrm{CAT}(-1)$, and suppose that $vol$ is a measure on $X$ giving finite, nonzero measure to $X$. Then
\[ h_{vol}(\tilde X,\tilde d) = \dim_H(\partial^\infty\tilde X, d_x^*)\]
where $\dim_H$ denotes Hausdorff dimension.
\end{prop}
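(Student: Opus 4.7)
The plan is to identify both quantities with the \emph{critical exponent}
\[ \delta(\Gamma) := \limsup_{R\to\infty}\frac{1}{R}\log\#\{g\in\Gamma : \tilde d(x,gx)\leq R\} \]
of $\Gamma := \pi_1(X)$ acting isometrically on $\tilde X$ by deck transformations. The triangle inequality shows $\delta(\Gamma)$ does not depend on the basepoint $x$.

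For the equality $h_{vol}(\tilde X,\tilde d) = \delta(\Gamma)$, I would first lift $vol$ to a $\Gamma$-invariant locally finite Borel measure $\widetilde{vol}$ on $\tilde X$, using that the covering $\tilde X\to X$ is a local isometry. Since $X$ is compact, one can choose a fundamental domain $D\subset\tilde X$ for $\Gamma$ with $0<\widetilde{vol}(D)=vol(X)<\infty$ and $\Delta := \diam(D)<\infty$. Writing $N(R):=\#\{g\in\Gamma : \tilde d(p,gp)\leq R\}$ for a fixed basepoint $p\in\tilde X$, a direct packing/covering argument, counting how many $\Gamma$-translates of $D$ are contained in, respectively meet, the ball $B(p,R)$, yields
\[ \widetilde{vol}(D)\cdot N(R-\Delta) \ \leq\ \widetilde{vol}(B(p,R))\ \leq\ \widetilde{vol}(D)\cdot N(R+\Delta). \]
Applying $\tfrac{1}{R}\log(\cdot)$ and taking $\limsup_{R\to\infty}$ gives $h_{vol}(\tilde X,\tilde d) = \delta(\Gamma)$. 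This step only uses cocompactness, not the curvature bound.

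For the equality $\dim_H(\partial^\infty\tilde X, d_x^*) = \delta(\Gamma)$, I would invoke the theorem of Coornaert \cite{coornaert}, valid for any cocompact isometric action of a discrete group on a proper CAT$(-1)$ (in fact, proper Gromov hyperbolic) space. The standard argument builds a Patterson--Sullivan conformal density $\mu$ on $\partial^\infty\tilde X$ of dimension $\delta(\Gamma)$ as a weak-$\ast$ limit of normalized Poincar\'e sums along a $\Gamma$-orbit, and then applies Bourdon's shadow lemma in the CAT$(-1)$ setting to obtain the uniform estimate $\mu(B_{d_x^*}(\zeta,r)) \asymp r^{\delta(\Gamma)}$ for all small $r>0$ and all $\zeta\in\partial^\infty\tilde X$. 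Together with the mass distribution principle, this two-sided estimate forces $\dim_H(\partial^\infty\tilde X, d_x^*) = \delta(\Gamma)$.

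The main obstacle is the second equality, which rests on the Patterson--Sullivan/shadow lemma machinery. Since this theory is now standard in the CAT$(-1)$ literature, it can be cited directly, so the only new input required is the elementary orbit-counting comparison in the middle paragraph.
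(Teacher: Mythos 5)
Your proposal is correct and follows essentially the same route as the paper: identify both $h_{vol}(\tilde X,\tilde d)$ and $\dim_H(\partial^\infty\tilde X, d_x^*)$ with the critical exponent $\delta(\Gamma) = \limsup_{R\to\infty}\frac{1}{R}\log N(R)$, using the fundamental-domain comparison $V\cdot N(R-\Delta)\leq \widetilde{vol}(B(p,R))\leq V\cdot N(R+\Delta)$ for the first equality, and a known Hausdorff-dimension-equals-critical-exponent theorem for the second. The only (cosmetic) difference is that the paper cites Bourdon's Theorem 2.7.5 directly for the identity $\dim_H(\partial^\infty\tilde X,d_x^*)=\delta(\Gamma)$, whereas you cite Coornaert and sketch the underlying Patterson--Sullivan and shadow-lemma argument; these are interchangeable references for the same result in the cocompact $\textrm{CAT}(-1)$ setting.
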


\begin{proof}[Proof of Prop \ref{prop:dim=vol}]
Let $N(R) = \#\{ \gamma \in \Gamma : d_{\tilde X}(*, \gamma *) \leq R\}$. The result follows easily from the following facts. First, due to Bourdon \cite[Theorem 2.7.5]{bourdon}, we have that
\[ \dim_H(\partial^\infty\tilde X, d_x^*) = \limsup_{R\to \infty} \frac{1}{R} \log N(R).\]
Second, if we let $F$ be a bounded fundamental domain for the $\Gamma$-action, with volume $V$ and diameter $D$,
\[ vol(B(*,R)) \leq V\cdot N(R+D)\]
and 
\[ V\cdot N(R) \leq vol(B(*,R+D)).\] 
\end{proof}

We now prove Theorem \ref{thm:rigidity}.

\begin{proof}[Proof of Theorem \ref{thm:rigidity}]

Since $X$ is a manifold, we may fix some Riemannian metric on $X$, for example the locally symmetric metric $g$ (though any metric will do). This defines a Riemannian volume form $vol$ on $X$. We first claim that $vol$ satisfies condition ($\star$) from Theorem \ref{leuzinger}.

Indeed, take $\delta_0$ so small that any $\delta_0$-ball for the $d$-metric on $X$ lifts isometrically to $(\tilde X, \tilde d)$. Then the volume of such a ball is clearly bounded above by the Riemannian volume of $X$, which is finite as $X$ is compact. This establishes the uniform upper bound on $vol(B^d(z,\delta))$.

Suppose that there is no uniform positive lower bound. Then for any fixed $\delta>0$, there exists a sequence of points $z_n$ in $X$ such that $vol(B^d(z_n,\delta)) \to 0$. By compactness we may extract a convergent subsequence $z_{n_i}$ with limit $z^*$. For sufficiently large $i$, $B^d(z^*,\delta/2)\subset B^d(z_{n_i},\delta)$ and hence $vol(B^d(z^*,\delta/2))=0$. Since $d$ and the Riemannian metric $g$ induce the same topology on $X$, there exists some $\epsilon>0$ such that $B^g(z^*,\epsilon)\subset B^d(z^*,\delta/2)$. But the Riemannian volume of $B^g(z^*,\epsilon)$ must be strictly positive, giving a contradiction. Therefore, condition ($\star$) holds for $vol$.

Let $(S,g)$ be the negatively curved symmetric space on which $(X,g)$ is locally modelled.
Using Theorem \ref{leuzinger}, Proposition \ref{prop:dim=vol} and Theorem \ref{pansu} and the fact that $\Gamma$ acts cocompactly on $X$ (and so $\Lambda=\partial^\infty \tilde X$), we have 
\begin{align}
	h_{top}(\phi^d_t) &= h_{vol}(\tilde X,\tilde d) \nonumber \\
				&=\dim_H(\partial^\infty\tilde X, d_x^*) \nonumber \\
				& \geq \dim_H(\partial^\infty S,g_x^*) \nonumber \\
				& = kn'+k-2. \nonumber
\end{align}

Now suppose that $h_{top}(\phi^d_t)$ achieves the lower bound. Then by Theorem \ref{bourdon}, there exists an isometric embedding $F$ from $S$ into $X$ with $\partial^\infty F(\partial^\infty S)=\Lambda =\partial^\infty \tilde X$; if $n>2$, this embedding can be taken to be $\Gamma$-equivariant. Since $X$ is a manifold with $\dim(X)=\dim(S)$ and $\partial^\infty F(\partial^\infty S)$ is the full boundary at infinity of $\tilde X$, it follows that this isometric embedding is in fact surjective. This finishes the proof.

\end{proof}

\bibliographystyle{alpha}
\bibliography{biblio}

\end{document}